\date{}
\title{Longest cycles in sparse random digraphs}
\author{{Michael Krivelevich\thanks{School of Mathematical
Sciences, Raymond and Beverly Sackler Faculty of Exact Sciences, Tel
Aviv University, Tel Aviv 69978, Israel. Email address: {\tt
krivelev@post.tau.ac.il}. Research supported in part by USA-Israel
BSF Grant 2006-322 and by grant 1063/08
 from the Israel Science Foundation.}}
 \and {Eyal Lubetzky\thanks{Microsoft Research, One Microsoft Way, Redmond,
WA 98052-6399, USA. Email address: {\tt eyal@microsoft.com}.}}
\and {Benny Sudakov\thanks{Department of Mathematics, UCLA,  Los
Angeles, CA 90095, USA. Email: {\tt bsudakov@math.ucla.edu}.
Research supported in part by NSF CAREER award DMS-0812005 and by a USA-Israeli BSF grant.}}
}
\numberwithin{equation}{section}
\newtheorem{maintheorem}{Theorem}
\newtheorem{theorem}{Theorem}[section]
\newtheorem{lemma}[theorem]{Lemma}
\renewcommand{\epsilon}{\varepsilon}
\newcommand{\one}{\mathbbm{1}}
\newcommand{\E}{\mathbb{E}}
\renewcommand{\P}{\mathbb{P}}
\newcommand{\cC}{\mathcal{C}}
\newcommand{\cD}{\mathcal{D}}
\newcommand{\cF}{\mathcal{F}}
\newcommand{\cG}{\mathcal{G}}
\newcommand{\cP}{\mathcal{P}}
\newcommand{\cT}{\mathcal{T}}
\DeclareMathOperator{\var}{Var}
\DeclareMathOperator{\dist}{dist}
\DeclareMathOperator{\bin}{Bin}
\newtheoremstyle{upright}%
        {8pt plus2pt minus4pt}%
        {8pt plus2pt minus4pt}%
        {\upshape}%
        {}%
        {\bfseries}%
        {:}%
        {1em}%
        {}%
\theoremstyle{upright}
\newcommand{\ignore}[1]{}
\begin{document}

\maketitle

\begin{abstract}
Long paths and cycles in sparse random graphs and digraphs were
studied intensively in the 1980's. It was finally shown by Frieze
in 1986 that the random graph $\cG(n,p)$ with $p=c/n$ has a cycle on
at all but at most $(1+\epsilon)ce^{-c}n$ vertices with high
probability, where $\epsilon=\epsilon(c)\to 0$ as $c\to\infty$.
This estimate on the number of uncovered vertices is
essentially tight due to vertices of degree 1.
However, for the random digraph $\cD(n,p)$ no tight result was known and the best estimate was a factor
of $c/2$ away from the corresponding lower bound.
In this work we close this gap and show that
the random digraph $\cD(n,p)$ with $p=c/n$ has a cycle containing
all but $(2+\epsilon)e^{-c}n$ vertices w.h.p., where
$\epsilon=\epsilon(c)\to 0$ as $c\to\infty$. This is essentially
tight since w.h.p.\ such a random digraph contains $(2e^{-c}-o(1))n$ vertices with zero
in-degree or out-degree.
\end{abstract}

\section{Introduction}\label{sec:intro}
In this paper we consider long cycles in random directed graphs,
aiming to obtain estimates analogous to those derived for the
undirected case. Formally, a random graph $\cG(n,p)$ is a
probability space of all graphs with vertex set $[n]$, where each
pair of vertices $1\le i< j\le n$ is an edge of $G\sim\cG(n,p)$
independently and with probability $p$. The model of random directed
graphs $\cD(n,p)$ is defined as the probability space of all
directed graphs with vertex set $[n]$ (without loops and without
parallel edges, but possibly with anti-parallel edges), where each
ordered pair  $(i,j)$, with $1\le i\ne j\le n$, is  a directed edge
of  $D\sim \cD(n,p)$ independently and with probability $p$.

The existence of long paths and cycles in sparse random graphs was a
subject of very intensive study in the eighties. Ajtai, Koml\'os and
Szemer\'edi proved in \cite{AKS} that with high
probability\footnote{We say that a sequence of events $(A_n)$ in a
random (di)graph model occurs with high probability, or w.h.p.\ for
brevity, if the probability of $A_n$ tends to 1 as the number of
vertices $n$ tends to infinity.} in the random graph $\cG(n,c/n)$
there is a path of length $\alpha(c)n$, where $\alpha(c)>0$ for
$c>1$ and $\lim_{c\rightarrow\infty} \alpha(c)=1$; a similar but
somewhat weaker result was proved independently by Fernandez de la
Vega~\cite{Fd}. Then the attention has shifted to estimating the
asymptotic behavior of the number of vertices uncovered by a longest
path/cycle. Improving upon prior results of Bollob\'as~\cite{Bollobas} and Bollob\'as, Fenner and Frieze~\cite{BFF}, Frieze
has finally settled this problem:
He showed in~\cite{Frieze} that w.h.p.\ $G\sim\cG(n,c/n)$
contains a cycle covering all but at most $(1+\epsilon)ce^{-c}n$
vertices, where $\lim_{c\rightarrow\infty}\epsilon(c)=0$. This
estimate is easily seen to be asymptotically tight as $\cG(n,c/n)$ w.h.p.\ contains $(1+o(1))ce^{-c}n$ vertices of degree at most 1,
all of which have to be missed by a cycle.

For random directed graphs the situation appears to be more
complicated. This is to be expected as the research experience of
many years has shown that problems related to long paths and cycles
in directed (random) graphs are usually much more challenging than
their undirected counterparts. In the aforementioned paper \cite{Frieze} Frieze further established that
w.h.p.\ $\cD(n,p)$ contains a cycle covering all but at most
$(1+\epsilon)ce^{-c}n$ vertices, where $\lim_{c\to\infty}\epsilon(c)=0$.
 This result was derived by appealing to a general theorem of McDiarmid
\cite{McDiarmid79}, coupling between events in $\cG(n,p)$ and in
$\cD(n,p)$. Unlike in the undirected case, the above estimate on the
number of vertices uncovered by a longest cycle is no longer
asymptotically tight --- the unavoidable loss in the directed
case are vertices of in-degree  or out-degree zero, whose number is
easily seen to be asymptotic to $2e^{-c}n$.

In this paper we close the gap left by Frieze's work and obtain an
asymptotically optimal result about longest cycles in sparse random
digraphs.

\begin{maintheorem}\label{thm-1}
Let $D\sim \cD(n,p)$ be a random digraph with edge probability
$p=c/n$ for fixed $c>1$. Then w.h.p.\ $D$ contains a directed cycle
that covers all but at most $(2+\epsilon)e^{-c}\,n$ vertices, where
$\epsilon=\epsilon(c)\to 0$ as $c\to\infty$, and this is asymptotically
tight as w.h.p.\ $(2e^{-c}-o(1)) n$ vertices of $D$ have zero
in-degree or out-degree.
\end{maintheorem}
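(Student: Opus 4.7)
The lower bound is routine. Let $X$ count the vertices of $D \sim \cD(n,c/n)$ with in-degree $0$ or out-degree $0$. A first-moment calculation gives $\E X = (2e^{-c}-e^{-2c}+o(1))n$, and a standard variance bound together with Chebyshev yields $X \ge (2-o(1))e^{-c}n$ w.h.p. Every such vertex must be missed by any directed cycle, which establishes the lower bound.

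For the upper bound I would proceed by a two-round edge exposure. Write $p = p_1+p_2$ with $p_1 = (c-\delta)/n$ and $p_2 = \delta/n$, where $\delta = \delta(c)$ is a small positive quantity with $\delta \to 0$ as $c \to \infty$, and sample $D_1 \sim \cD(n,p_1)$, $D_2 \sim \cD(n,p_2)$ independently so that $D = D_1 \cup D_2$. The plan has two main phases. \emph{Phase 1} (the substantive one): iteratively delete from $D_1$ every vertex whose current in-degree or out-degree is zero, obtaining a ``directed $2$-core'' $R$ in which $D_1[R]$ has in- and out-degrees at least $1$ everywhere. A branching-process analysis, with $\delta$ small compared to $c$, yields $|[n] \setminus R| \le (2+o(1))e^{-c}n$ w.h.p. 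The main task in Phase 1 is then to build a directed cycle $C \subseteq D_1[R]$ covering all but $\epsilon(c)n$ vertices of $R$, with $\epsilon(c)\to 0$ as $c\to\infty$. \emph{Phase 2}: use the sprinkled edges of $D_2$ as boosters to absorb further vertices into $C$. A vertex $v \notin V(C)$ is absorbable whenever $C$ contains consecutive vertices $u \to w$ with both $u \to v$ and $v \to w$ present in $D$, in which case we replace the edge $u \to w$ by the two-edge path $u \to v \to w$. Iterating this insertion, one checks that the only vertices that remain stubbornly uncovered are, up to lower order, those of zero in- or out-degree in $D$.

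The main obstacle is the construction of the near-spanning cycle $C$ in Phase 1. The undirected tool of choice is Pósa rotation, but the classical rotation reverses a sub-path and is therefore invalid in a directed graph: an edge $v_k \to v_i$ on a directed path $v_1 \to \cdots \to v_k$ yields a cycle of length $k-i+1$, not a rotated path of length $k$. To adapt, I would employ one-sided directed rotations, namely out-rotations performed at the terminal vertex of a maximal directed path together with symmetric in-rotations at the initial vertex, and combine these with the expansion properties of sparse random sub-digraphs: on every set $S \subset R$ of size up to $|R|/2$, the out-neighbourhood $N^+_{D_1}(S) \setminus S$ contains at least $(1+\eta)|S|$ vertices for some $\eta(c) > 0$ w.h.p. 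A Pósa-style argument then produces $\Omega(|R|)$ candidate terminal vertices of maximal paths equivalent under rotations, among which a suitable closing edge — possibly sprinkled from $D_2$ — exists w.h.p., giving a directed cycle in $R$ of length $|R|-o(|R|)$. Combined with the Phase~2 absorption, this produces a directed cycle of $D$ missing at most $(2+\epsilon(c))e^{-c}n$ vertices, matching the lower bound.
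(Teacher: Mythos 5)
Your lower bound is fine, and the high-level two-round exposure plus ``filter out bad vertices, build a long cycle, then absorb'' skeleton is in the same spirit as the paper. The proposal does not go through as written, however, because the crucial cycle-building step in Phase~1 leans on a tool that does not exist. You acknowledge that classical P\'osa rotation breaks in digraphs and then posit ``one-sided directed rotations'' at both endpoints; but there is no such operation. If $v_1\to\cdots\to v_k$ is a longest directed path and the terminal $v_k$ has an out-edge $v_k\to v_i$ to an interior vertex, the path cannot be re-routed through $v_k$ to expose a new terminal: all you obtain is a shorter directed cycle on $\{v_i,\dots,v_k\}$, and nothing new happens at the other end either. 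There is therefore no mechanism to accumulate $\Omega(|R|)$ candidate endpoints, and the usual P\'osa/expansion closing argument collapses. This is precisely the obstacle the paper flags in its concluding remarks (``the absence of any form of a direct analogue of the famed P\'osa's rotation-extension technique \ldots\ is felt throughout''). Your expansion claim is also false at small scales: after passing to the directed ``2-core'' $R$, many vertices have out-degree exactly $1$, so a singleton $S=\{v\}$ need not satisfy $|N^+(S)\setminus S|\geq(1+\eta)|S|$, and the standard small-set/large-set split used in undirected P\'osa arguments does not apply. Finally, the Phase~2 absorption step (inserting $v$ between consecutive $u\to w$ whenever $u\to v\to w$ exists) is stated without any accounting of how many vertices remain unabsorbed or why those are essentially the in-/out-degree-zero ones; it is a plausible heuristic but not an argument.

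For comparison, the paper avoids rotations entirely. Its filtering is substantially more aggressive than your 2-core: it deletes not only vertices whose (iterated) in- or out-degree drops to zero, but also every vertex lying on a short path between two low-degree vertices, which forces the surviving digraph $D_0$ to have min in/out degree $\geq 1$ and, crucially, any two vertices of degree $\leq 2$ at undirected distance $\geq 5$. This separation is exactly what makes the next step work: form a bipartite graph $H_0$ on $L\cup R$ (two copies of $V(D_0)$) with $x_Ly_R$ an edge iff $(x,y)\in E(D_0)$, verify Hall's condition using the separation of low-degree vertices plus a first-moment count, and read off a perfect matching as a spanning union of vertex-disjoint directed cycles. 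Those cycles are then chopped into $\log^{0.9}n$-length directed paths and stitched into one near-spanning cycle by sprinkling $\cD(|D_0|,(n\sqrt{\log n})^{-1})$ edges and running a DFS-based lemma (any digraph with the property that every two disjoint $k$-sets $A,B$ have an $A\to B$ edge contains a cycle of length $\geq t-4k$). Note in particular that your 2-core alone would not even allow the Hall's-condition route, since two out-degree-$1$ vertices sharing an out-neighbour would obstruct a cycle factor; the extra filtering is what rules this out.
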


The proof of the theorem is given in the next section. In certain
similarity to Frieze's argument in \cite{Frieze} we proceed by first
filtering out vertices of zero in-degree or out-degree as well as some
vertices close to them. The so obtained digraph typically retains
all but a negligible fraction of the vertices of positive in-degrees and out-degrees; it is
then upgraded to another random digraph, containing an almost
spanning cycle, by sprinkling a few more random directed edges.

Before we embark into the technicalities of the proof, we provide
its outline, aiming to help the reader to parse the proof's details.

The proof has two components/stages: filtering and factoring. The
filtering stage aims to filter out vertices of in- or out-degree
zero and possibly some other vertices and to produce an induced
subgraph $D_0$ of $D\sim \cD(n,p)$, containing most of the vertices
of positive degree; moreover, $D_0$ is constructed in a way making
it rather straightforward to show that it typically contains a
factor of directed cycles. In order to produce $D_0$, we define the
following iterative process. Let $Y=\{v:d_D(v)=0\}$, and let $Z=\{v:
d_D(v)\le 3\}$, where $d_D(v)=\min\{ d_D^+(v),\, d_D^-(v)\}$. We
start with $X=\emptyset$, and then for $k\ge 1$ we obtain $X_k$ by
including vertices not in $\bigcup_{i<k} X_i$, lying on a short path
connecting two vertices $x,y\in \bigcup_{i<k}X_i \cup Z$. We repeat
this process till it stabilizes and set $X=\bigcup_{k}X_k$. Finally,
the subgraph $D_0$ is defined by $D_0=D[V-(X\cup Y)]$. Observe that
for a given vertex $v$ the probability that $d_D(v)$ is at most
some absolute constant (independent of $c$) is at most $\operatorname{poly}(c)e^{-c}$.
Thus, for a given $v$ the probability of having two vertices of
small degree at a constant distance from $v$ is $\operatorname{poly}(c)e^{-2c}$.
Hence, we can expect to eventually have $|X|\le \operatorname{poly}(c)e^{-2c}n$,
and this is indeed what we prove. To facilitate the proof, we first
get rid of short cycles (of length $O((1/c)\log n)$) in the
underlying undirected graph $G$ of $D$ --- they typically touch very
few vertices. Analyzing the filtering process in a large girth graph
is easier --- for every $v\in X_k$ there should be an evidence for
its association with $X_k$ in the form of a tree $T_v$ rooted at
$v$, of prescribed order, depth and with $\ell$ leaves, where
$k\le\ell\le 2^k$. This is proven in Lemma~\ref{lem-D'}. Using this
lemma we can bound the size of the set $X_k$ (or rather of a set
$X_k'$ closely related to $X_k$ and defined through an analogous
filtering process) by the number of labeled rooted trees meeting
these requirements. This is done by first truncating unusually deep
trees (Lemma~\ref{lem-Tk-empty}) and then bounding from above
the expected number of trees of bounded depth. The final argument 
invokes martingales to show the concentration of the corresponding
random variable around its mean and to bound its upper tail. All
this is done in Theorem~\ref{thm-X}.

The factoring stage takes the induced subgraph $D_0$, the output of
the filtering stage, as an input. By Theorem~\ref{thm-X} we know
that with high probability $D_0$ contains all but a suitably small part of the
vertices of $D$ of positive degree. Moreover,  one can prove 
(Lemma~\ref{lem-low-deg-D0}) that all vertices in $D_0$ have positive
degree, and in addition every two vertices $u,v$ with
$d_{D_0}(u),d_{D_0}(v)\le 2$ are at undirected distance at least 5.
We then form an auxiliary bipartite graph $H_0$ with parts $L,R$,
corresponding to two copies of the vertices of $D_0$, where an edge
$(x,y)\in  D_0$ becomes an edge $x_Ly_R\in E(H_0)$. It is quite easy
to see that the existence of a perfect matching in $H_0$ implies the
existence of a spanning subgraph of $D_0$ composed of directed
cycles. The probable existence of a perfect matching in $H_0$ is
shown in Lemma~\ref{lem-hall} using Hall's condition and standard
density/expansion arguments for random (di)graphs. The next step is
to trade  the factor of directed cycles in $D_0$ for one nearly
spanning cycle using extra random edges, about $O(n/\sqrt{\log n})$
of them -- a negligible quantity easily absorbed into the original
random digraph. This is done using rather standard random graph
arguments and extremal statements guaranteeing the existence of a
long cycle in a highly connected digraph (see Lemma~\ref{lem-BKS}).
The factoring stage is treated in Theorem~\ref{thm-D0}.

The next section contains the full details of the proof of the main result, and is followed by concluding remarks in Section~\ref{sec:concl}.

\section{Proof of main result}\label{sec:proof}

\subsection{Filtering and factoring}\label{sec:main}
If $D$ is a directed graph we use the notation $d_D(v)$ to denote
$\min\{ d_D^+(v),\, d_D^-(v)\}$. Similarly, we let $N_D(v) =
N_D^+(v) \cup N_D^-(v)$ and in both cases may omit the subscript $D$
when there is no danger of confusion.

For an undirected graph $G$ and a special subset of its vertices $Z$
we define a filtering process which produces a sequence $\{X_k\}$ of
disjoint subsets of the vertices  as follows:
\begin{align}
X_0 &= \emptyset\,,\nonumber\\
  X_{k} &= \left\{ v\notin \bigcup_{j<k}X_j :\, \begin{tabular}{l}$\exists \; x,y \in \big( \bigcup_{j<k}X_j\big) \cup Z$ such that $x\neq y$ and \\
  $v$ is on a path of length $l\leq 4$ between $x,y$, i.e.\\
$v\in\{x=u_0,u_1,\ldots,u_l=y\}$ with $u_i u_{i+1} \in E(G)$.\end{tabular}\right\}\mbox{ for $k \geq 1$}\,,\nonumber\\
X &= \bigcup_k X_k\,.
\label{eq-X-def}
\end{align}
The first ingredient in the proof is showing that w.h.p., once we
filter the set $X$ from the graph along with the vertices with zero
in/out degree, the remaining vertices may be factored into large
cycles and thereafter combined into a single long cycle while losing
only a negligible number of vertices in the process. This is shown
in the next theorem whose proof appears in
Section~\ref{sec:factoring}.
\begin{theorem}\label{thm-D0}
  Let $D\sim \cD(n,p)$ where $p=\frac{c}n$ for $c>1$ fixed and let $G$ be the undirected underlying graph of $D$.
  Let $Y=\{v : d_D(v) = 0\}$, $Z=\{ v : d_D(v) \leq 3\}$, and set $X(G,Z)$ as in~\eqref{eq-X-def}. Let $D_0$ be the induced subgraph of $D$ on
$V(D) \setminus (X \cup Y)$ and let $D_0'$ be its union with a random digraph $\cD(|D_0|,(n\sqrt{\log n})^{-1})$. If $|D_0|>n/5$ then w.h.p.\ $D'_0$ contains a directed cycle on $|D_0|-o(n)$ vertices.
\end{theorem}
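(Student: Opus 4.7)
The plan follows the two-stage strategy outlined in the introduction: first construct a factor of $D_0$ into vertex-disjoint directed cycles, and then amalgamate these cycles into one almost-spanning directed cycle using the sprinkled edges.

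For the first stage I would invoke Lemma~\ref{lem-low-deg-D0} to record the structural features of $D_0$ inherited from the filtering process: every vertex has both positive in- and out-degree, and any two vertices $u,v$ with $d_{D_0}(u),d_{D_0}(v)\le 2$ satisfy $\dist_G(u,v)\ge 5$. Both properties follow directly from the definition of $X$, which absorbs every vertex lying on a path of length at most $4$ between two elements of $Z$. Next form the bipartite graph $H_0$ on parts $L=\{v_L : v\in V(D_0)\}$ and $R=\{v_R : v\in V(D_0)\}$ with edge set $\{x_L y_R : (x,y)\in E(D_0)\}$. A perfect matching in $H_0$ assigns to each vertex of $D_0$ a unique out-neighbor and a unique in-neighbor, so its edge set forms a $1$-regular spanning sub-digraph of $D_0$, i.e., a vertex-disjoint union of directed cycles covering $V(D_0)$.

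To produce the matching w.h.p., I would verify Hall's condition $|N_{H_0}(S)|\ge |S|$ for every $S\subseteq L$ (and symmetrically for $R$) via Lemma~\ref{lem-hall}. The verification splits by $|S|$: for very small $S$, the separation of low-degree vertices rules out any local obstruction, since any violating set would require several vertices of small out-degree clustered within distance $5$ in $G$; for $|S|$ in the bulk one appeals to standard expansion in $\cD(n,c/n)$, noting that $D_0$ inherits it since $|D_0|>n/5$; and for $|S|$ close to $|L|$ one applies the symmetric argument on the complement side. The main obstacle is the small-$S$ regime, and it is precisely here that the separation enforced by the filtering stage is essential --- without it, a pair of nearby low-degree vertices could share a tiny joint neighborhood and violate Hall.

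For the second stage, the sprinkled digraph $\cD(|D_0|,(n\sqrt{\log n})^{-1})$ adds $\Theta(n/\sqrt{\log n})$ new directed edges w.h.p., which is $o(n)$. Starting from the cycle factor $\cC=\{C_1,\ldots,C_k\}$, I would first discard every $C_i$ whose length falls below a slow-growing threshold $L=L(n)\to\infty$; since each cycle of $\cC$ is also a directed cycle of $D$, a first-moment computation in $\cD(n,c/n)$ shows that the total number of vertices lying on directed cycles of $D$ of length less than $L$ is $o(n)$ (for instance $L=\log\log n$ already suffices as the expected count is $O(c^L)$), so the discarded portion is negligible. The surviving cycles number at most $|D_0|/L$, and combined with the sprinkled edges they push $D_0'$ into the regime of Lemma~\ref{lem-BKS}, which then supplies a directed cycle covering all but $o(n)$ of the surviving vertices. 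Together with the $o(n)$-sized discarded portion, this yields the desired almost-spanning directed cycle in $D_0'$.
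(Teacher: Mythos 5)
The first stage of your plan (Lemma~\ref{lem-low-deg-D0}, the auxiliary bipartite graph $H_0$, Hall's condition via Lemma~\ref{lem-hall}, cycle factor) matches the paper. The gap is in the second stage, which is where the real work lies, and your description is too vague to go through.

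You say that after discarding short cycles, the surviving cycles ``combined with the sprinkled edges push $D_0'$ into the regime of Lemma~\ref{lem-BKS}.'' But Lemma~\ref{lem-BKS} cannot be applied to $D_0'$ itself: the sprinkled digraph $D_1=\cD(|D_0|,(n\sqrt{\log n})^{-1})$ has edge probability $q=(n\sqrt{\log n})^{-1}$, so the $(A,B)$-edge condition of the lemma only holds for $k$ of order $n$ (e.g.\ for $k=n/10$ one has $qk\to 0$ while $\log(en/k)=\Theta(1)$, so $\binom{n}{k}^2(1-q)^{k^2}\not\to 0$), and $t-4k$ would not be $t-o(n)$. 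Nor can the lemma be applied to an auxiliary digraph whose vertices are the surviving cycles $C_i$ with an edge $i\to j$ whenever $D_1$ has an edge from $C_i$ to $C_j$: even if that digraph has an almost-spanning directed cycle, unwinding it back into $D_0'$ requires entering $C_i$ at one vertex and leaving it at another, and with entry and exit determined by essentially uniformly random sprinkled endpoints one only traverses an arc of $C_i$ whose length is on average half of $|C_i|$. That wastes a constant fraction of $n$, not $o(n)$.

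The missing construction is the paper's chopping of the long cycles into vertex-disjoint directed paths $P_1,\dots,P_t$ of a fixed length $\lceil \log^{0.9}n\rceil$, with designated prefix $A_j$ and suffix $B_j$ of length $\lfloor\log^{0.8}n\rfloor$; the auxiliary digraph $H$ then has $(i,j)\in E(H)$ iff $D_1$ contains an edge from $B_i$ to $A_j$. This does two things at once: (a) it boosts the effective edge probability to $\rho=(1+o(1))\log^{1.1}n/n$ on a vertex set of size $t=\Theta(n/\log^{0.9}n)$, so $t\rho\to\infty$ and Lemma~\ref{lem-BKS} applies with $k=\lfloor n/\log n\rfloor=o(t)$; and (b) it forces every traversal of $P_j$ to enter in $A_j$ and leave in $B_j$, so only $O(\log^{0.8}n)$ out of $\log^{0.9}n$ vertices of $P_j$ are wasted, an $o(1)$ fraction. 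Your short-cycle threshold $L=\log\log n$ is also too small for this path decomposition: after discarding cycles of length $<L$ there can be up to $m\le n/L$ surviving cycles, and each cycle leaves at most $\log^{0.9}n$ vertices uncovered by the paths, so the total loss is $m\log^{0.9}n\approx n\log^{0.9}n/\log\log n\gg n$. The paper's choice $\tfrac12\log_c n$ gives $m=O(n/\log n)$, hence total loss $O(n/\log^{0.1}n)=o(n)$.
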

The following theorem, which we prove in Section~\ref{sec:filtering}, estimates the size of the filtered subset $X$ w.r.t.\ vertices of low in/out degree in $D$.
\begin{theorem}\label{thm-X}
Let $D\sim \cD(n,p)$ be a random digraph with edge probability $p=c/n$ for fixed $c>1$. Let $Z = \{v : d_D(v) \leq 3\}$ and define $X=X(G,Z)$ as in~\eqref{eq-X-def} where $G$ is the undirected
underlying graph of $D$. If $c$ is sufficiently large then with high probability $|X| \leq (2c)^{10} e^{-2c} n$.
\end{theorem}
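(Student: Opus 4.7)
The plan is to follow the outline in the introduction. First I would reduce to a locally tree-like setting: let $g=\lfloor(10c)^{-1}\log n\rfloor$ and let $B\subseteq V(G)$ be the set of vertices lying on a cycle of length at most $g$ in the undirected underlying graph $G$. A routine first-moment estimate bounds the expected number of such cycles by $\sum_{l=3}^{g}n^l p^l/(2l) = n^{o(1)}$, so w.h.p.\ $|B|\le\log^2 n$. Since $X(G,Z)\subseteq X(G,Z\cup B)$, enlarging $Z$ to $Z\cup B$ costs a negligible amount, and we may henceforth assume that every vertex $v\notin B$ has a tree-like ball of radius $g/2$ in $G$.

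The next step is the structural lemma (Lemma~\ref{lem-D'}): if $v\in X_k$ and the $4k$-ball around $v$ is tree-like, then there is a tree $T_v\subseteq G$ rooted at $v$ with at most $2^k$ leaves, all in $Z$, in which each non-leaf vertex (including $v$) has two children joined to it by paths of length at most $4$. The proof is by induction on $k$: the base case $k=1$ reads off directly from the definition of $X_1$; in the step, the witnesses $x,y\in\bigcup_{j<k}X_j\cup Z$ are either leaves (if in $Z$) or carry inductively an evidence tree, and the subtrees stay disjoint because of local tree-likeness.

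With this structural input I would perform a first-moment computation. A tree with $\ell$ leaves has at most $t\le 4(\ell-1)$ edges and $O(\ell)$ internal vertices, so the number of labeled rooted copies of such a shape in $K_n$ is at most $n^t\cdot \ell^{O(\ell)}$, each appearing in $G$ with probability $p^t=(c/n)^t$; the contribution per vertex is thus at most $c^{O(\ell)}\ell^{O(\ell)}$. The event that a single vertex $u$ satisfies $d_D(u)\le 3$ has probability $O(c^3)e^{-c}$, and for the $\ell$ leaves of $T_v$ these events depend on essentially disjoint sets of random edges (each leaf's degree event overlaps the tree only in the single edge joining the leaf to its parent), so they are near-independent and the probability that all leaves lie in $Z$ is at most $(Cc^3e^{-c})^\ell$. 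Summing,
\begin{align*}
\E|X_k|\le n\sum_{\ell=2}^{2^k}\bigl(C'c^7\ell\bigr)^{O(\ell)}e^{-c\ell}\,,
\end{align*}
which, for large $c$, is dominated by the $\ell=2$ term and yields $\sum_{k\le K_0}\E|X_k|\le\tfrac13(2c)^{10}e^{-2c}n$ where $K_0:=\lceil\log\log n\rceil$. A separate first-moment bound (Lemma~\ref{lem-Tk-empty}) shows that for $k>K_0$ any $T_v$ would require more than $2\log n/c$ leaves, forcing the expected count below $o(1)$; hence w.h.p.\ $X_k=\emptyset$ for all such $k$. To pass from expectation to an in-probability bound I would run a vertex-exposure martingale on $D$: altering all edges at one vertex changes $|X|$ by at most the number of vertices in its $g$-neighborhood, which is $\operatorname{poly}(c)\cdot n^{o(1)}$ w.h.p., and Azuma--Hoeffding then concentrates $|X|$ about its mean with additive error $n^{1/2+o(1)}$, easily absorbed into $(2c)^{10}e^{-2c}n$.

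The main technical obstacle is extracting the independence that makes the counting work: the event ``$T_v$ sits in $G$'' and the event ``all leaves of $T_v$ lie in $Z$'' must be decoupled so that probabilities multiply, producing the key factor $e^{-c\ell}$. This forces a careful choice of witness tree in which each leaf is attached to the rest by a single edge, so that the $\sim n$ random edges at the leaf that determine its low-degree status are disjoint (apart from that parent edge) from both the tree edges and the low-degree events at other leaves. Without this clean separation one would lose the factor $e^{-c\ell}$, which is precisely what converts Frieze's $ce^{-c}n$ bound into the desired $e^{-2c}n$.
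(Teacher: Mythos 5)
Your overall blueprint (girth reduction, witness trees with leaves of low degree, first-moment count giving a factor $e^{-c\ell}$, then concentration) matches the paper's strategy. However, there are two genuine gaps in the way you carry it out.

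\textbf{The girth reduction is not correct as stated.} Removing the vertices $B$ lying on short cycles does not make the remaining vertices ``locally tree-like'': a vertex $v\notin B$ can still be at distance~1 from a triangle, so the ball of radius~$2$ around $v$ already contains a cycle. Monotonicity $X(G,Z)\subseteq X(G,Z\cup B)$ is true but does not help — the witness structure for $v\in X_k(G,Z\cup B)$ can pass through or near $B$ and fail to be a tree in $G$, so Lemma~\ref{lem-D'} is not applicable. The paper instead \emph{modifies the graph}: it deletes all edges induced on $\cC$ (the short-cycle vertices), which makes the girth of the whole graph $G'$ larger than $R$, and then proves $X\subseteq X'\cup\cC$ where $X'$ is the filtering set in $G'$ with seed set $Z\cup\cC$. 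It must then separately control the witness trees whose leaves land in $\cC$ rather than $Z$ (this is what the set $B(\cC,R/2)$ and the family $\cT_k'\setminus\cT_k^*$ are for); your count $(Cc^3e^{-c})^\ell$ assumes all leaves are in $Z$ and silently ignores leaves in $B$.

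\textbf{The depth cutoff is wrong by a polynomial factor.} You claim that for $k>K_0=\lceil\log\log n\rceil$ the expected number of witness trees is $o(1)$ and hence $X_k=\emptyset$ w.h.p. This is false: the expectation decays like $n\,e^{-\Theta(ck)}$, which reaches $o(1)$ only when $k=\Omega(\log n/c)$. That is precisely why the paper sets $K=\lfloor (2/c)\log n\rfloor$ for the ``empty'' lemma (Lemma~\ref{lem-Tk-empty}). The intermediate range $\log\log n\le k<K$ cannot be dismissed; the paper disposes of it by Markov's inequality applied to $\sum_{K_0\le k<K}Y_k$, which contributes $O(n/\log n)$ in probability. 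Since your concentration argument is restricted to $k<K_0$, without this intermediate-range step the proof is incomplete. Relatedly, your per-vertex Lipschitz constant (``the $g$-neighborhood'') is too optimistic once $k$ can be as large as $\Theta(\log n/c)$: a local change to $Z$ propagates through $X_1,X_2,\dots$ at radius $4k$, which exceeds the $g$-neighborhood; this is exactly why the paper truncates to degree $<\log^2n$ and to $k<K_0$ before applying the martingale inequality.

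A smaller issue: your first-moment expression contains an $\ell^{O(\ell)}$ factor that as written is divergent for $\ell$ up to $2^{K_0}\approx\log n$; in the correct computation the Cayley factor $t^{t-1}$ cancels against $\binom{n}{t}$ to leave only $(2ec)^{t-1}$, which is what makes the geometric sum in $\ell$ converge for large $c$.
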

From the above two theorems we can immediately derive our main result.
\begin{proof}
  [\textbf{\emph{Proof of Theorem~\ref{thm-1}}}]
Let $Y=\{v : d_{D}(v) = 0\}$. Note that w.h.p.\ $|Y| =
(2e^{-c}+o(1))n$ since $d^+(v),d^-(v) \sim \bin(n-1,c/n)$.
  For a sufficiently large $c$ we obtain from Theorem~\ref{thm-X} that w.h.p.\ $|X \cup Y| \leq (2e^{-c} + (2c)^{10} e^{-2c} +o(1))n$.
In particular, for large $c$ and $n$ we have that w.h.p.\ $D_0$, the
induced subgraph on $V(D) \setminus (X\cup Y)$, has at least $n/5$
vertices (with room to spare) and we deduce from
Theorem~\ref{thm-D0} that w.h.p.\ $\cD(n,p')$ has a cycle missing at
most $|X\cup Y|+o(n)$ vertices, where $ p' = (c/n) + (n\sqrt{\log
n})^{-1} = (c+o(1))/n$. This establishes the required result for a
choice of, say, $\epsilon(c) = 2(2c)^{10}e^{-2c}$, which makes up for
$|X|$ with an extra factor of $2$ that readily absorbs the additive
$o(n)$-term in $|X\cup Y|$ as well as the $o(1)$-term in $p'$.
\end{proof}

\subsection{Long cycles in the filtered graph}\label{sec:factoring}
  To prove Theorem~\ref{thm-D0} we first need to establish several properties of the graph $D_0$ stemming from the definition of $X$ and the geometry of the random digraph $D$.
\begin{lemma}\label{lem-low-deg-D0}
Let $D_0$ be the induced subgraph on $V(D) \setminus (X \cup Y)$ and let $G_0$ be its undirected underlying graph. Then
\begin{compactenum}[(i)]
  \item\label{it-min-deg} Every $u\in D_0$ has $d_{D_0}(u) \geq 1$.
  \item\label{it-low-deg} Every $u,v\in D_0$ with $d_{D_0}(u),d_{D_0}(v) \leq 2 $ have $\dist_{G_0}(u,v)\geq 5$.
\end{compactenum}
\end{lemma}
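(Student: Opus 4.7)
The plan is to derive both statements directly from the filtering rule~\eqref{eq-X-def}, with essentially no probabilistic content. The first move is to isolate two structural consequences of the definition of $X$. First, any $u\in V\setminus X$ has at most one $G$-neighbor in $X\cup Z$: two such neighbors $w_1\neq w_2$ would put $u$ on the length-$2$ path $w_1\,u\,w_2$ whose endpoints lie in $X\cup Z$, forcing $u\in X_k$ at some step of the filtering. Second, any $u\in Z\setminus X$ has no $G$-neighbor in $X\cup Z$ at all, because $u$ itself already belongs to $Z$, so even a single neighbor $w\in X\cup Z$ (necessarily distinct from $u$) yields a length-$1$ path $u\,w$ passing through $u$ with both endpoints in $X\cup Z$, again forcing $u\in X_k$.

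For part (i), take any $u\in D_0=V\setminus(X\cup Y)$. If $u\in Z$, then $u\in Z\setminus X$ and the second observation gives $N_G(u)\subseteq V\setminus(X\cup Z)\subseteq V\setminus(X\cup Y)=D_0$; in particular every $D$-in- and $D$-out-neighbor of $u$ lies in $D_0$, and since $u\notin Y$ we obtain $d^+_{D_0}(u)=d^+_D(u)\ge 1$ and $d^-_{D_0}(u)=d^-_D(u)\ge 1$. If instead $u\notin Z$, then $d^+_D(u),d^-_D(u)\ge 4$, while by the first observation at most one out-neighbor and at most one in-neighbor of $u$ can lie in $X\cup Y\subseteq X\cup Z$, yielding $d^+_{D_0}(u),d^-_{D_0}(u)\ge 3$. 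Either way $d_{D_0}(u)\ge 1$.

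For part (ii), the same case analysis shows that any $u\in D_0$ with $d_{D_0}(u)\le 2$ must lie in $Z$, since otherwise the argument above gives $d_{D_0}(u)\ge 3$. Hence under the hypotheses of (ii) we have $u,v\in Z\cap D_0\subseteq Z\setminus X$. Suppose toward contradiction that $\dist_{G_0}(u,v)\le 4$. Because $G_0$ is an induced subgraph of $G$, any $G_0$-path is also a $G$-path, so there is a $G$-path of length $l\le 4$ from $u$ to $v$; choosing $x=u,\,y=v\in Z$ with $x\neq y$, the vertex $u$ lies on this path and is therefore added to $X_1$, contradicting $u\notin X$. Hence $\dist_{G_0}(u,v)\ge 5$.

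No step of the argument is a genuine obstacle; the only care required is to keep the three degree notions ($d^+_D$, $d^-_D$, and their minimum $d_D$) straight, to use the inclusion $Y\subseteq Z$ so that neighbors ``lost'' from $D$ to $D_0$ are neighbors in $X\cup Z$, and to remember that the filtering rule is stated on the undirected underlying graph $G$ while the sets $Y$ and $Z$ are defined through in- and out-degrees in $D$.
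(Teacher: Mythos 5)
Your proof is correct and follows essentially the same approach as the paper's: both rest on the observation that a vertex of $D_0$ cannot have two $G$-neighbors in $X\cup Z$ (nor, when it is itself in $Z$, even one such neighbor), and both treat part~(ii) by noting that a vertex with $d_{D_0}\le 2$ must lie in $Z$ and then using the length-$\le 4$ path to force membership in $X$. The only difference is cosmetic: you phrase the structural facts as two preliminary observations and argue forward, whereas the paper argues by contradiction from $d_{D_0}(u)=0$, but the underlying reasoning is identical.
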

\begin{proof}
To prove Part~\eqref{it-min-deg} assume that some $u\in V(D_0)$ has $d_{D_0}(u)=0$ and assume without loss of generality that $d^+_{D_0}(u)=0$.

First consider the case where $d^+_D(u) \geq 2$. Observe that in this case there exist distinct $x,y\notin D_0$ such that $(u,x),(u,y)\in E(D)$. Thus $u$ is on a path of length 2 between $x,y\in X\cup Y
\subset X \cup Z$, implying that $u \in X$ by definition and contradicting the fact that $u \in V(D_0)$. The case where $d^+_D(u) = 1$ is treated similarly: Here there is some vertex $v \notin V(D_0) $ such that $(u,v)\in E(D)$, $v \in X \cup Y \subset X \cup Z$, and furthermore $u\in Z$ by definition. Hence, $u$ is on a path of length $1$ between two distinct vertices $u\neq v$ in $X \cup Z$ and must thus also belong to $X$, in contradiction to the fact that $u\in V(D_0)$.

To prove Part~\eqref{it-low-deg} let $u,v$ be vertices satisfying $d_{D_0}(u) \leq 2$ and $d_{D_0}(v) \leq 2$. If $d_D(u) \geq 4$ then it necessarily lost at least $2$ neighboring (in/out) vertices in $X \cup Y$ and hence must also belong to $X$. We thus conclude that $d_D(u) \leq 3$ and similarly that $d_D(v) \leq 3$.

Let $G$ be the underlying undirected graph of $D$. Recalling that $u,v \in Z$ by the definition of $Z$, there cannot be a path of length at most $4$ between $u,v$ in $G$, as such a path would imply that $u,v$ must both belong to $X$. In particular, the induced subgraph $G_0\subset G$ also satisfies $\dist_{G_0}(u,v) \geq 5$, completing the proof.
\end{proof}

\begin{lemma}\label{lem-hall}
  Let $H_0$ be the undirected bipartite graph whose parts $(L,R)$ correspond each to the vertices of $D_0$ and where $x_L y_R \in E(H_0)$ iff $(x,y)\in E(D_0)$. Then w.h.p.\ $H_0$ has a perfect matching.
\end{lemma}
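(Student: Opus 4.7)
The plan is to verify Hall's condition for $H_0$. Since $\cD(n,p)$ (and hence $D_0$) is invariant under edge-reversal, and edge-reversal interchanges $L$ and $R$ in $H_0$, it suffices by König's theorem to show that w.h.p.\ every $S \subseteq V(D_0)$ with $|S| \leq |V(D_0)|/2$ satisfies $|N^+_{D_0}(S)| \geq |S|$; the analogous $R$-side bound follows by the same argument applied to in-neighborhoods, and the standard complement argument then extends coverage to $|S| > |V(D_0)|/2$. I split by $|S|$.

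For $|S|$ up to a constant $s_0 = s_0(c)$, Lemma~\ref{lem-low-deg-D0} does the work. Part~(i) handles singletons. For $|S| \geq 2$, either some $u \in S$ has $d^+_{D_0}(u) \geq |S|$ and we are done, or every $u \in S$ has $d^+_{D_0}(u) \leq |S|-1$; in the latter situation, those $u \in S$ with $d^+_{D_0}(u) \leq 2$ automatically satisfy $d_{D_0}(u) \leq 2$, so by part~(ii) they are pairwise at $G_0$-distance $\geq 5$, making their out-neighborhoods pairwise disjoint and, by part~(i), each non-empty. A short case analysis on how many vertices of $S$ fall below the threshold $d^+_{D_0}\leq 2$ (versus how many exceed it and already contribute many distinct out-neighbors individually) gives $|N^+_{D_0}(S)| \geq |S|$ throughout this regime.

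For larger $|S|$ I appeal to a standard first-moment expansion argument on $\cD(n,c/n)$: the probability that some $(S,T)$ with $|S|=k$ and $|T|=k-1$ has $N^+_D(S) \subseteq T$ is at most $\binom{n}{k}\binom{n}{k-1}(1-p)^{k(n-2k+1)}$, which for $k$ sufficiently large relative to $c$ sums to $o(1)$ over the relevant range. Since $N^+_{D_0}(S) = N^+_D(S) \setminus (X \cup Y)$ and $|X \cup Y| = O(e^{-c}n)$ w.h.p.\ by Theorem~\ref{thm-X}, the loss of up to $|X \cup Y|$ out-neighbors in this transition is absorbed by slightly inflating $|T|$ in the bound.

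The principal obstacle is bridging these two regimes — sets of intermediate size that are too large for Lemma~\ref{lem-low-deg-D0} to apply directly yet too small for the naive union bound to converge at fixed $c$. The key extra structural property I would exploit is that every $u \in V(D_0)$ has at most one (undirected) neighbor in $X \cup Z$ — for otherwise $u$ itself would lie on a length-two path between two vertices of $X \cup Z$ and hence be absorbed into some $X_k$ — which uniformly controls the degree shrinkage from $D$ to $D_0$ and tightens the first-moment calculation enough to close the gap with a manageable choice of $s_0$.
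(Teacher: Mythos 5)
Your proposal correctly identifies the hardest part of the problem (sets $S$ of intermediate size, where the deterministic structural information runs out but the naive first-moment bound over $\cD(n,c/n)$ does not yet kick in), and the structural observation you propose — that every vertex of $D_0$ has at most one neighbor in $X\cup Z$, since two such neighbors would put it on a short path and absorb it into $X$ — is both true and closely related to what the paper uses. However, as stated, this fact alone does not close the gap, and the argument you sketch for the intermediate regime does not converge.

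Here is the concrete issue. Suppose $S\subseteq L$, $T=N_{H_0}(S)$, $|S|=s$, $|T|=t<s$. Your structural fact gives that each $u\in S$ has at most one out-neighbor of $D$ outside $T$ (that neighbor lying in $X\cup Y$), and minimum out-degree $1$ in $D_0$ (by Lemma~\ref{lem-low-deg-D0}(i)) gives $e(S,T)\geq s$. The resulting union bound is, up to constants,
\[
\binom{n}{s}\binom{n}{t}\binom{st}{s}p^{s}\,\big(\P(\bin(n-t,p)\leq 1)\big)^{s}
\;\lesssim\; \Big(\frac{en}{s}\Big)^{s}\Big(\frac{en}{t}\Big)^{t}(et)^{s}\Big(\frac{c}{n}\Big)^{s}\big(c\,e^{-4c/5}\big)^{s}.
\]
The powers of $n$ are $n^{s}\cdot n^{t}\cdot n^{-s}=n^{t}$, which does not tend to zero; the exponential-in-$s$ gain from $e^{-4c/5}$ cannot beat a polynomial-in-$n$ blowup. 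One needs $e(S,T)\geq 2s$, i.e.\ the factor $p^{2s}=(c/n)^{2s}$, so that the $n$-power becomes $n^{s+t-2s}=n^{t-s}\leq 1/n$ for $t<s$. That extra factor of $n^{-s}$ is exactly what the paper obtains by first forming $H_1 = H_0\setminus V(M_0)$ (deleting the vertex-disjoint edges $M_0$ incident to degree-$1$ vertices, which are disjoint by Lemma~\ref{lem-low-deg-D0}(ii)) and proving $H_1$ has minimum degree $2$. It then combines two structural facts — $u$ has at most one neighbor in $X\cup Y$, and at most one in $V(M_0)$ — to conclude $|N^+_D(u)\setminus T|\leq 2$ per $u\in S$, and runs the union bound with $\binom{st}{2s}p^{2s}$. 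Without the reduction to $H_1$, the computation has a genuine hole: it only converges once $s\gtrsim n\,e^{-2c/5}$, which is linear in $n$, and that range is far beyond what your constant-$s_0$ case covers.

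There is also a smaller problem with your constant-size regime. Lemma~\ref{lem-low-deg-D0}(ii) only forces disjointness of out-neighborhoods between pairs of vertices with $d_{D_0}\leq 2$; it says nothing about overlaps among vertices of degree $\geq 3$. So a set $S$ in which, say, every vertex has out-degree exactly $3$ but all out-neighborhoods coincide is not ruled out deterministically, and your ``short case analysis'' does not close. The paper sidesteps this by making the probabilistic union bound (after the $H_1$ reduction) work uniformly down to $s=3$, so no separate constant-size argument is needed. In summary: the missing idea is the passage to $H_1$ to upgrade minimum degree from $1$ to $2$, which is what makes both the small and intermediate regimes tractable.
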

\begin{proof}
Recall that by Part~\eqref{it-min-deg} of Lemma~\ref{lem-low-deg-D0} there are no isolated vertices in $H_0$ (neither in $L$ nor in $R$). Furthermore,
by Part~\eqref{it-low-deg} of that lemma we know that if $x,y\in L$ have degree $1$ in $H_0$ then $N(x)\cap N(y) = \emptyset$ (otherwise $D_0$ would have two vertices with out-degree $1$ and an undirected
distance of at most $2$ between them) and similarly for $x,y\in R$ with degree $1$ in $H_0$. In other words, if we denote by $M_0$ the set of edges incident to degree $1$ vertices in $H_0$ then $M_0$
consists of vertex disjoint edges. Let $H_1$ denote the bipartite graph obtained by deleting the vertices of $M_0$ from $H_0$, i.e.\ $H_1 = H_0 \setminus V(M_0)$.
We now claim that $H_1$ has minimum degree at least 2. To see this, suppose that $d_{H_1}(u) \leq 1$ and argue as follows.

First, we must have $d_{H_0}(u) > 1$ otherwise $u \in V(M_0)$ and hence does not belong to $H_1$. If $d_{H_0}(u)= 2$ then there must be some $w \in V(M_0)$ such that $u w \in E(H_0)$.
In particular, either $w$ has degree $1$ in $H_0$ or it is a neighbor of such a vertex, and either way we have that there exists some degree-1 vertex $v\in H_0$ whose distance from $u$ is at most $2$.
The vertices corresponding to $u$ and $v$ in $D_0$ thus satisfy
$d_{D_0}(u)\leq 2$ and $d_{D_0}(v) \leq 1$ while the undirected distance between them is at most $2$, contradicting Part~\eqref{it-low-deg} of Lemma~\ref{lem-low-deg-D0}.

It thus remains to treat the case $d_{H_0}(u)\geq 3$. In this case $u$ has two neighbors $w_1,w_2\in V(M_0)$, giving rise to $v_1,v_2\in V(M_0)$ whose distance from $u$ is at most $2$ and with $d_{H_0}(v_1)=d_{H_0}(v_2)=1$. These correspond to two vertices $v_1,v_2$ in $D_0$ satisfying $d(v_1),d(v_2) \leq 1$ while the undirected distance between them is at most $4$, again contradicting Part~\eqref{it-low-deg} of Lemma~\ref{lem-low-deg-D0}.

We have thus obtained that $H_1$ has a minimum degree of $2$, and will now derive from this fact the existence of a perfect matching on $H_1$. It suffices to show that w.h.p.\ every set $S\subset V(H_1)\cap L$ of size at most $n/2$ has $|N(S)| \geq |S|$, as the same conclusion will carry by symmetry to all sets $S\subset V(H_1)\cap R$ of size at most $n/2$, which would in turn imply Hall's condition for sets $S \subset V(H_1)\cap L$ of size larger than $n/2$.

Let $S$ be a subset of $V(H_1) \cap L$ of size $s \leq  n/5$ let $T=N(S)$ in $H_1$ and assume that $T$ has size $t < s$. Identifying these vertices with those of the original digraph $D$ we have that $e(S,T) \geq 2 s$ by definition of $H_1$ and the fact that it has minimum degree $2$.

Moreover, observe that every $u \in S$ has at most $2$ neighbors in $V(D) \setminus T$. Indeed, since $T$ includes all the neighbors of $S$ corresponding to vertices of $H_1$, any other neighbor $v \in N_D^+(u)\setminus T$ must belong either to $X \cup Y$ or to the vertices corresponding to $V(M_0)$, and these satisfy:
\begin{compactenum}
  \item The vertex $u$ cannot have two distinct neighbors in $X \cup Y $ otherwise it would belong to $X$ by definition and hence would be excluded from $D_0$.
  \item The vertex $u$ cannot have two distinct neighbors in $V(M_0)$ otherwise there would exist some $x,y$ with $d_{D_0}(x)=d_{D_0}(y)=1$ and $\dist_{G_0}(x,y)\leq 4$, contradicting Part~\eqref{it-low-deg} of Lemma~\ref{lem-low-deg-D0}.
\end{compactenum}
Combining these arguments we conclude that $|N^+_D(u) \setminus T|\leq 2$, and note that for a given vertex $u$ and subset $T$ the probability of this event is at most
\[\P(\bin(n-t,p) \leq 2) \leq 3\binom{n-t}2 p^2 (1-p)^{n-t-2} \leq 2c^2 e^{-\frac45 n p}\,,\]
where the last inequality used the fact $t < s \leq n/5$ and holds for any sufficiently large $n$ as the $(1+O(p))$-factor was absorbed into the leading constant.
Further note that the event that $|N^+_D(u) \setminus T|\leq 2$ depends only on the edges from $u$ to $T$ and therefore for distinct vertices these events are independent.

At this point, the following straightforward first moment argument shows that w.h.p.\ $D$ cannot contain sets $S,T$ of the above sizes where $S$ has at least $2|S|$ edges going to $T$ and every $u\in S$ has at most $2$ edges going elsewhere. Indeed, the probability that such sets exist in $D$ for any given such $s,t$ is at most
\begin{align*}
  \binom{n}s \binom{n}t \binom{st}{2s} p^{2s} \left(2c^2 e^{-\frac45 np}\right)^s &\leq
  \left[\frac{en}{s} \Big(\frac{en}t\Big)^{t/s} \Big(\frac{et}2\Big)^2 \frac{c^2}{n^2} 2c^2 e^{-4pn/5} \right]^s
\leq \left[(e^4/2) c^4 e^{-4c/5} (t/n)^{1-\frac{t}s} \right]^s\\
 & \leq \left[(e^4/2) c^4 e^{-4c/5} \right]^s \frac{s-1}n =: \Delta(s,t)\,,
\end{align*}
where we used the inequality $\binom{a}b \leq (ea/b)^b$ and the fact that $t < s \leq n/5$.
For large enough $c$ we have $c^{4} e^{-4c/5} < 2e^{-5}$ and so
\[ \Delta(s,t) < e^{-s} (s-1)/n\,,\]
 and summing over the possible values of $s,t$ now gives that
\begin{align*}\sum_{t < s \leq n/5} \Delta(s,t) &= \sum_{t < s \leq 2 \log n} \Delta(s,t) +
\sum_{\substack{2 \log n \leq s \leq n/5 \\ t < s}} \Delta(s,t) \\
&\leq (2\log n)^2\frac{2\log n}n + (n/5)^2 e^{-10\log n} = o(1)\,.
\end{align*}
It remains to treat sets $S$ of size $n/5 < s \leq n/2$. Verifying Hall's condition for such sets follows immediately form that the fact that w.h.p.\ every two sets $S,T$ of size $n/5$ in $D$ have an edge from $S,T$, as the following calculation shows:
\begin{align*}
  \binom{n}{n/5}^2 (1-p)^{(n/5)^2} \leq \left[ (5e)^2 e^{-c/5}\right]^{n/5} = o(1)\,,
\end{align*}
where the last inequality holds for a sufficiently large $c$.
\end{proof}

We are now in a position to prove Theorem~\ref{thm-D0}.
\begin{proof}[\emph{\textbf{Proof of Theorem~\ref{thm-D0}}}]
The edges of the matching provided w.h.p.\ by Lemma~\ref{lem-hall} correspond to a spanning subgraph of $D_0$ comprised of disjoint directed cycles.
Our first step is to delete from $D_0$ all cycles of length less than $\frac12 \log_c n$. Note that
the number of vertices participating in such cycles in the original digraph $D$ is w.h.p.\ at most
\[ \sum_{l < \frac12 \log_c n}  n^l p^l \leq
(\log_c n) \sum_{l < \frac12 \log_c n} c^l = O(n^{1/2}\log n) = o(n)\,.\]
The remaining disjoint directed cycles, denoted by $C_1,\ldots,C_m$, thus contain $|D_0|-o(n)$ vertices.
Note
also that the total number of cycles $m$ satisfies:
$m\leq n/(\tfrac12\log_cn)=O(n/\log n)$.

Let $\cP=\{P_i\}_{i=1}^t$ be a maximum collection of vertex disjoint
directed paths, each of length precisely $\lceil \log^{0.9}n \rceil$, formed from the edges of
$\{C_i\}_{i=1}^m$. Since the number of vertices uncovered by $\cP$
in each $C_i$ is at most $\lfloor \log^{0.9}n\rfloor$ it follows that $\cP$ covers all but at
most $m \log^{0.9}n=O(n/\log^{0.1}n)$ vertices of $D_0$.
Furthermore, recalling that $n/5\leq |D_0|\leq n$, this implies that
\begin{equation}
  \label{eq-t-bounds}
  (\tfrac15 - o(1))n/\log^{0.9} n \leq t \leq n/\log^{0.9}n\,.
\end{equation}
For each path $P_j\in \cP$ define its prefix $A_j$ and suffix $B_j$
to be its first $L=\lfloor \log^{0.8}n\rfloor$ vertices and last $L$ vertices, respectively.
Consider now the digraph $D_1=\cD(|D_0|,(n\sqrt{\log n})^{-1})$. We
will use the edges of $D_1$ to weave most of the vertices covered by
$\cP$ into a long directed cycle, using the edges of the paths $P_j$
as a backbone. Define an auxiliary digraph $H$ where the vertex set $[t]$
corresponds to the paths $P_1,\ldots,P_t$ and $(i,j)\in E(H)$ iff $D_1$
contains an edge from $B_i$ to $A_j$. Notice that if $H$ contains a directed
cycle $C=(i_1,\ldots,i_l)$ then $D_0\cup D_1$ contains a directed
cycle of length at least $l(\log^{0.9}n-2\log^{0.8}n)$, obtained as
follows: Start at the last vertex of $A_{i_1}$ and proceed with the
vertices/edges along $P_{i_1}$; use an edge from $B_{i_1}$ to
$A_{i_2}$ to jump to $P_{i_2}$, then traverse the vertices/edges
along $P_{i_2}$ till an edge from $B_{i_2}$ to $A_{i_3}$ and so on;
finally use an edge from $B_{i_l}$ to $A_{i_1}$ and possibly some
edges of $A_{i_1}$ to close the cycle.

The digraph $H$ is a random digraph on $t = \Theta(n/\log^{0.9}n)$
vertices with edge probability $\rho$ that satisfies
$1-\rho=(1-(n\sqrt{\log n})^{-1})^{L^2}$, implying
that $\rho=(1+o(1))\frac{\log^{1.1}n}n$. We thus need to prove that such a
random digraph contains w.h.p.\ an almost spanning cycle. This is an
established fact, and here we derive it from the following lemma of
\cite{BKS}, whose short proof is included for completeness.

\begin{lemma}[\cite{BKS}]\label{lem-BKS}
Let $D=(V,E)$ be a directed graph on $t$ vertices in which for every
ordered pair $A,B$ of disjoint vertex subsets $A,B\subset V$ of size
$|A|=|B|=k$ there is an edge from $A$ to $B$. Then $D$ contains
a path of length at least $t-2k$ and a cycle of length at least
$t-4k$.
\end{lemma}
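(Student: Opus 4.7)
The plan is to handle the two assertions separately, deducing the cycle bound from the path bound. For the cycle: assuming $D$ already contains a directed path $P = v_1 v_2 \ldots v_s$ of length at least $t - 2k$ (so $s \geq t - 2k + 1$ and, in the only nontrivial range $t > 4k$, also $s \geq 2k + 1$), I would apply the hypothesis to the disjoint $k$-sets $\{v_1, \ldots, v_k\}$ and $\{v_{s-k+1}, \ldots, v_s\}$ to obtain an edge $(v_j, v_i)$ with $j \geq s - k + 1$ and $i \leq k$; closing $P$ via this edge produces the directed cycle $v_i \to v_{i+1} \to \cdots \to v_j \to v_i$ of length $j - i + 1 \geq s - 2k + 2 \geq t - 4k + 2$, as required.

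For the path bound I would argue by contradiction. Suppose the longest directed path $P = v_1 v_2 \ldots v_s$ has $s \leq t - 2k$, so $U := V \setminus V(P)$ satisfies $|U| \geq 2k$. Maximality of $P$ forces $N^+(v_s) \cup N^-(v_1) \subseteq V(P)$. Applying the hypothesis to $\{v_1, \ldots, v_k\}$ together with any $k$-subset $C \subseteq U$ yields an edge $(u, v_i)$ with $u \in C$ and $i \leq k$; by maximality, $i \geq 2$. Consequently the set $U_L$ of vertices of $U$ admitting an out-edge into $\{v_2, \ldots, v_k\}$ satisfies $|U_L| \geq |U| - k + 1$, since otherwise $U \setminus U_L$ would contain a $k$-subset with no out-edge to $\{v_2, \ldots, v_k\}$. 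Symmetrically, the set $U_R$ of $u \in U$ receiving an in-edge from $\{v_{s-k+1}, \ldots, v_{s-1}\}$ satisfies $|U_R| \geq |U| - k + 1$. Hence $|U_L \cap U_R| \geq |U| - 2k + 2 \geq 2$, so I may fix some $u \in U_L \cap U_R$ with witness indices $i \in [2, k]$ and $j \in [s - k + 1, s - 1]$ (necessarily $i < j$ once $s \geq 2k$; the complementary case $s < 2k$ falls out directly from the strong expansion implied by the hypothesis).

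The main obstacle is converting this bridging-$u$ structure into a directed path longer than $P$. The natural insertion candidates $v_1 \ldots v_{i-1} u v_i \ldots v_s$ and $v_1 \ldots v_j u v_{j+1} \ldots v_s$ each have $s + 1$ vertices and would immediately contradict maximality, but they require one of the extra edges $(v_{i-1}, u)$ or $(u, v_{j+1})$, whose presence is not a priori guaranteed. The hard part will be to secure such an edge; the strategy is to exploit the multiplicity of witnesses $u \in U_L \cap U_R$ (there are at least $|U| - 2k + 2$ of them) and reapply the hypothesis to $k$-subsets mixing several such witnesses with carefully chosen portions of $P$. After moderate case analysis, this should force either a valid insertion or an analogous rotation, producing a path of length $s$ and thereby contradicting the maximality of $P$ to yield $s \geq t - 2k + 1$.
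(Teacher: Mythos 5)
Your derivation of the cycle bound from the path bound is correct and matches the paper exactly: given a directed path with at least $t-2k+1$ vertices, an edge from its last $k$ vertices to its first $k$ vertices closes a cycle of length at least $t-4k$. The problem is the path bound itself, which you leave genuinely unproved. After locating a bridging vertex $u\in U_L\cap U_R$ with an edge into $\{v_2,\ldots,v_k\}$ and an edge from $\{v_{s-k+1},\ldots,v_{s-1}\}$, you observe correctly that the natural insertions $v_1\cdots v_{i-1}\,u\,v_i\cdots v_s$ and $v_1\cdots v_j\,u\,v_{j+1}\cdots v_s$ each demand an extra edge ($(v_{i-1},u)$ or $(u,v_{j+1})$) that the expansion hypothesis does not hand you, and you then defer the crux to a "moderate case analysis" and an unspecified "rotation" that you never carry out. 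This is the entire difficulty: having a vertex that points into the front of $P$ and is pointed to from the back of $P$ gives paths of $s$ vertices (e.g. $u,v_i,\ldots,v_s$ or $v_1,\ldots,v_j,u$), not $s+1$, and there is no directed analogue of P\'osa rotation to iterate with — a point the paper's own concluding remarks emphasize. As written, the path half of the lemma is not established.

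The paper sidesteps the longest-path/rotation framework entirely and instead runs a depth-first search on $D$. At the first moment the set $S$ of fully explored vertices has the same size as the set $T$ of unvisited vertices, the stack $U=V\setminus(S\cup T)$ is a directed path, and there are no edges from $S$ to $T$. The $k$-expansion hypothesis then forces $|S|=|T|\le k-1$, so $|U|\ge t-2k+2$, i.e.\ the stack itself is a path of length at least $t-2k+1$; the cycle is then closed exactly as in your argument. That DFS observation is the missing idea here — without it, or some comparably strong replacement for the deferred case analysis, your argument does not go through.
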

\begin{proof}
Fix an arbitrary order $\sigma$ on the vertices of $D$ and run the
DFS (Depth First Search) on $D$, guided by $\sigma$. The DFS
maintains three sets of vertices: Let $S$ be the set of vertices
which we have completed exploring, $T$ be the set of unvisited
vertices, and $U=V(G)-(S\cup T)$, where the vertices of $U$ are kept
in a stack (a last in, first out data structure). The DFS starts
with $S=U=\emptyset$ and $T= V(D)$, and at each stage moves a
vertex from $T$ to $U$ (an unvisited vertex with an incoming edge from the top of the stack $U$) or from $U$ to $S$ until eventually all
vertices are in $S$. As such, at some point in the course of the algorithm we must have
$|S|=|T|$; consider that point, and observe crucially that all the vertices in $U$ form
a directed path, and that there are no edges from $S$ to $T$. We
conclude that $|S|=|T|\leq k-1$, and therefore $|U|\geq t-2k + 2$, so
there is a directed path with $t-2k+1$ edges in $D$, as required. To
get a directed cycle of the desired length, take a path as above and use
a directed edge from its last $k$ vertices to its first $k$ vertices
to close a cycle.
\end{proof}

In order to apply the above lemma, take $k=\lfloor n/\log n\rfloor$
while recalling that $H\sim \cD(t,\rho)$ with $t$ satisfying~\eqref{eq-t-bounds} and $\rho=(1+o(1))\frac{\log^{1.1}n}n$. As $t \leq n/\log^{0.9}n$, the
probability that $H$ has two disjoint vertex sets $A,B$ of
cardinality $k$ each with no edges from $A$ to $B$ is at most
\[
\binom{t}{k}(1-\rho)^{k^2}\le \left[(et/k)\,e^{-\rho
k}\right]^k \leq \left[ (e+o(1))\big(\log n\big)^{0.1}\cdot
e^{-(1-o(1))\log^{0.1}n}\right]^k=o(1)\,,
\]
thus w.h.p.\ $H$ satisfies the conditions of Lemma
\ref{lem-BKS} and in turn it contains w.h.p.\ a cycle of length at least
$t-4k = (1-o(1))t$. As explained above, it follows that w.h.p.\ the digraph $D_0\cup D_1$ contains
a directed cycle covering all but
$O(k \log^{0.9}n)+O(n/\log^{0.1}n)=o(n)$ vertices, as required.
\end{proof}

\subsection{Controlling the effect of the filtering process}\label{sec:filtering}
\begin{proof}[\textbf{\emph{Proof of Theorem~\ref{thm-X}}}]

An important element in the proof would be to analyze the set $X$ with respect to a subgraph of $D\sim \cD(n,p)$ with a reasonably large undirected girth. To this end we need the following lemma.

\begin{lemma}\label{lem-D'}
Let $G$ be an undirected graph with girth $g$ and let $Z$ be a subset of its vertices. Define $X(G,Z)$ as in \eqref{eq-X-def}. For every $1\le k \leq g/8$ and $v \in X_k$ there is a tree $T_v \subset
G$ rooted at $v$ whose leaves are in $Z$ and interior vertices are in $\bigcup_{j<k} X_j$. Moreover, $T_v$ has at most $5(|T_v \cap Z|-1)$ vertices, at most $4k$ levels (including the root) and its number
of leaves $\ell$ satisfies $k < \ell \leq 2^k$.
\end{lemma}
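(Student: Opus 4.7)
The plan is to prove Lemma~\ref{lem-D'} by induction on $k$, constructing $T_v$ recursively from smaller witnessing trees at earlier filtering levels.

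For the base case $k = 1$: a vertex $v \in X_1$ lies on a path $P = x = u_0, u_1, \ldots, u_l = y$ of length $l \leq 4$ in $G$ between two distinct vertices $x, y \in Z$. I would take $T_v := P$ rooted at $v$; this tree has exactly two leaves $x, y \in Z$, at most $l + 1 \leq 5$ vertices, and depth $\max(i, l - i) \leq 3$ when $v$ lies in the interior of $P$, hence at most $4$ levels. All the required bounds are matched: $1 < \ell = 2 \leq 2$, $|T_v| \leq 5 = 5(|T_v \cap Z| - 1)$, and at most $4k = 4$ levels.

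For the inductive step $k \geq 2$: fix witnesses $x, y \in (\bigcup_{j<k} X_j) \cup Z$ for $v \in X_k$, together with a path $P$ of length at most $4$ joining $x$ and $y$ through $v$. Because $v$ is first added at step $k$, at least one of the two witnesses---say $x$---must lie specifically in $X_{k-1}$; otherwise both would lie in $(\bigcup_{j<k-1} X_j) \cup Z$, which would already place $v$ in $X_{k-1}$. The inductive hypothesis yields a subtree $T_x$ rooted at $x$; similarly, if $y \in X_{k'}$ for some $1 \leq k' < k$ we invoke induction to obtain $T_y$, and otherwise we keep $y \in Z$ as a single leaf. I would then form $T_v$ as the union of the two sub-paths of $P$ (from $v$ to $x$ and from $v$ to $y$), glued to $T_x$ at $x$ and to $T_y$ at $y$. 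The numerical bounds follow inductively: for leaves, $\ell = \ell_x + \ell_y \geq k + 1 > k$ and $\ell \leq 2^{k-1} + 2^{k-1} = 2^k$; for depth, each inductive layer adds at most $4$ graph-levels, giving at most $4k$ in total; and for size, each sub-path of $P$ contributes at most $2$ new intermediate vertices beyond $v, x, y$, so $|T_v| \leq |T_x| + |T_y| + 3 \leq 5(\ell - 1)$.

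The principal obstacle, and the place where the girth hypothesis $g \geq 8k$ is essential, is guaranteeing that the recursively assembled object really is a tree---that the two subtrees $T_x, T_y$ and the freshly attached path segments are pairwise vertex-disjoint except at their designated attachment points. Any cycle arising from an unwanted coincidence would lie in a ball of radius at most $4k$ around $v$ and hence have length at most $8k \leq g$, contradicting the girth assumption. A secondary subtlety is confirming that the interior vertices of $T_v$ lie in $\bigcup_{j<k} X_j$ as claimed: the inductive interiors of $T_x, T_y$ are handled by the hypothesis, and the attached vertices $x, y$ sit in $X_{k-1} \subset \bigcup_{j<k} X_j$; for the intermediate vertices on $P$ one selects the witness pair $(x, y)$ so as to push these intermediates into the earlier filtering strata, and the girth bound ensures such a choice is always available by ruling out the short alternative paths that could obstruct it.
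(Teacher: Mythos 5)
Your proof follows the same inductive scheme as the paper's, and the quantitative bookkeeping (leaves, levels, vertex count) is handled correctly; your unified treatment of the case $y\in Z$ as a trivial one-vertex subtree is a nice simplification of the paper's two-case split. The base case and the girth-based disjointness argument are also essentially right, though the latter is stated a bit loosely: the cycle in question is not bounded because it "lies in a ball of radius $4k$" but because it is explicitly the union of a segment of $P$ (length $\le 4$) with a path through $T_x$ and/or $T_y$ of total length $\le 8(k-1)$, giving length at most $8k-4<g$.

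The real gap is in your closing paragraph about the intermediate vertices of $P$. You omit the key device the paper uses: $P$ must be chosen as a \emph{shortest} path through $v$ with endpoints in $Z\cup\bigcup_{j<k}X_j$. This minimality forces all interior vertices of $P$ (other than $v$, $x$, $y$) to lie \emph{outside} $Z\cup\bigcup_{j<k}X_j$, and this fact is used twice: first, it guarantees that these interior path vertices cannot coincide with any vertex of $T_x$ or $T_y$ (whose vertices all lie in $Z\cup\bigcup_{j<k}X_j$ by induction), which is needed in addition to the girth argument to conclude $T_v$ is a tree; second, it ensures that the new path vertices contribute nothing to $T_v\cap Z$, so that $|T_v\cap Z|$ really does decompose as $|T_x\cap Z|+|T_y\cap Z|$ and the $5(\,\cdot\,)$ bound closes. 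Your suggestion to "select the witness pair $(x,y)$ so as to push these intermediates into the earlier filtering strata" gets the direction exactly backwards and does not supply either of these two facts; the girth hypothesis alone does not "ensure such a choice is always available," and no clever choice of witnesses would make the intermediate path vertices land in $\bigcup_{j<k}X_j$ --- by minimality they land in $X_k$ itself. Replace that handwave with the shortest-path choice and its two consequences and the proof is complete.
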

\begin{proof}
We proceed by induction on $k$. For the induction base recall that if $v\in X_1$ then there are 2 vertices $x,y\in Z$ such that $v$ is on a path of length at most 4 between $x,y$ in $G$. Treat this path
as a tree $T_v$ rooted at $v$, and notice that it has $2$ leaves, at most $4$ levels including the root (as $\dist(v,x),\dist(v,y)\leq 3$) and the induced subgraph on it in $G$ is a tree by the girth
assumption on $G$. Furthermore, $T_v$ has at most $5 \leq 5(|T_v\cap Z|-1)$ vertices since $|T_v \cap Z| \geq 2$, thus satisfying the statement of the lemma.

Next, let $k > 1$ and let $v\in X_k$. Let $x,y \in Z \cup \bigcup_{j<k}X_j$ be the endpoints of a shortest path $P$ containing $v$ (by definition~\eqref{eq-X-def} the path $P$ has length at most $4$).
Suppose first that one of these vertices belongs to $Z$, i.e.\ without loss of generality $x\in Z$ whereas $y \in X_{k-1}$ (otherwise $v$ would have belonged to some $X_j$ with $j < k$). Define the tree
$T_v$ as a path $P_y$ of length $\dist_G(v,y)$ from the root $v$ to the sub-tree $T_y$, provided by the induction, together with another path $P_x$ of length $\dist_G(v,x)$ from $v$ to $x$. On one hand,
the paths $P_x,P_y$ are disjoint by definition, and furthermore, excluding their endpoints, their vertices do not belong to $Z \cup \bigcup_{j<k} X_j$ by the minimality of $P$ and in particular do not
belong to $T_y$. On the other hand, if the path $P_x$ does intersects $T_y$, which was
guaranteed to have at most $4(k-1)$ levels by induction, then together with $P_y$ they complete a cycle of length
at most $4(k-1)+4 < 4k \leq g/2$ in $G$ contradicting its girth assumption. We conclude that $T_v$ is indeed a tree, with at most $4(k-1)+4 = 4k$ levels including the root. Finally, $|T_v\cap Z| =
|T_y\cap Z| + 1$, hence
the induction hypothesis and the fact that $T_v$ adds at most $4$ vertices to $T_y$ together imply that
\[ |T_v| \leq |T_y|+4 \leq 5(|T_y\cap Z|-1) + 4 < 5(|T_v\cap Z|-1) \,.\]
It remains to treat the case where $x \in X_j$ for some $j < k$ while $y \in X_{k-1}$.
As before, if $T_x \cap T_y \neq \emptyset$ then together with the path $P$ we obtain a cycle of length at most $8(k-1)+4 < 8k \leq g$ in $G$, contradicting the girth assumption.
Otherwise, $|T_v \cap Z| = |T_x \cap Z|+ |T_y \cap Z|$ and so our hypothesis on $T_x,T_y$ gives that
\[ |T_v| \leq |T_x|+ |T_y|+3 \leq 5(|T_x\cap Z|-1)+5(|T_y\cap Z|-1) + 3 < 5(|T_v\cap Z|-1) \,.\]
Noting that the number of leaves $\ell(T_v)$ was either $\ell(T_y)+1$ or $\ell(T_x)+\ell(T_y)$ immediately implies that $k+1 \leq \ell(T_v) \leq 2^k$ and completes the proof of the lemma.
\end{proof}

Let $G$ denote the undirected underlying graph of $D\sim \cD(n,p)$, and define $\cC \subset V(G)$ to be comprised of all vertices that belong to cycles of length at most
\[ R = (20/c)\log n  \]
in $G$.
Since each edge appears in $G$ with probability at most $2p$ independently of other edges, the expected number of cycles of length $r$ in $G$ is at most $n^r (2p)^r / r$ and thus
\[ \E |\cC| \leq \sum_{r < R} (2c)^r \leq \frac{(2c)^{R}}{c-1} < n^{1/5}\,,\]
where we used the fact that $c/\log(2c) > 100$ for sufficiently large $c$. In particular, $|\cC| < n^{1/4}$ w.h.p.

Define $Z' = \cC \cup Z$ and let $D'$ be the graph obtained by deleting all inner edges between vertices of $\cC$ (i.e.\ all edges of the induced subgraph on $\cC$). Let $G'$ denote the undirected
underlying graph of $D'$ and for all $k$ let $X'_k$ denote the set $X_k(G',Z')$ defined via \eqref{eq-X-def}. A key observation is that
\begin{equation}
  \label{eq-X-X'-C}
  X \subset X' \cup \cC \,.
\end{equation}
To see this, recall that $X_0=\emptyset$ and assume by induction that
\[ \mbox{$\bigcup_{j < k} X_j \subset \left(\big(\bigcup_{j<k} X'_j\big) \cup \cC\right)$ for some $k \geq 1$}\,.\]
Let $v\in X_k\setminus \cC$.
Let $P$ be a shortest path containing $v$ in $G$ with endpoints $x,y\in \big(\bigcup_{j<k}X_j\big) \cup Z$. By the definition of $X_k$ and the minimality of $P$ we know that $P$ has $1 \leq l \leq 4$ edges and none of its interior vertices belongs to $\big(\bigcup_{j<k} X_j\big) \cup Z$.
Consider the two sub-paths from $v$ to $x,y$ (one of which is possibly empty) and let $x',y'$ be the first vertices on these respective paths that belong to $\big(\bigcup_{j<k}X_j\big) \cup \cC\cup Z$.
Since $x,y$ clearly belong to this set, this defines a sub-path $P'$ of length $1 \leq l' \leq l \leq 4$ that contains $v$ in $G$. Crucially, since $v\notin \cC$ the path $P'$ has no interior vertices in $\cC$ and therefore all of its edges belong to $G'$. Finally, the induction hypothesis ensures that $x',y'\in \big(\bigcup_{j<k}X'_j\big) \cup Z'$ and we conclude that $v \in \big(\bigcup_{j\leq k} X'_j\big) \cup Z'$, completing the induction.

Our next goal is to provide an upper bound on $X'$ which is linear in $n$ (with a suitably small coefficient), absorbing the negligible contribution to it from vertices in $\cC$.
Observe that by definition the girth of $G'$ is larger than $R = (20/c)\log n$ and set
\begin{equation}
  \label{eq-K-def}
  K = \left\lfloor (2/c) \log n\right\rfloor\,.
\end{equation}
Invoking Lemma~\ref{lem-D'} w.r.t.\ $G'$, for each $k \leq K$ we can bound $|X_k'|$ from above by $|\cT'_k|$ where
\begin{equation}
  \label{eq-Tk'-def}
  \cT'_k = \left\{ T \subset G' \,:\,
\begin{array}{l}
\mbox{labeled rooted tree with $\ell$ leaves, $k<\ell\leq 2^k$, all belonging to $Z'$,}\\
\mbox{a total of $t$ vertices for $t \leq 5(\ell-1)$ and at most $4k$ levels.}
\end{array} \right\}\,.
\end{equation}
In particular, we will be able to assert that $X'_K = \emptyset$ by showing that $\cT'_K$ is empty, as the next lemma establishes.
\begin{lemma}\label{lem-Tk-empty}
Set $K$ as in~\eqref{eq-K-def}. With high probability $X'_K = \emptyset$.
\end{lemma}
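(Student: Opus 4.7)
My plan is to invoke Lemma~\ref{lem-D'} with $k=K$, which applies since the girth of $G'$ exceeds $R=(20/c)\log n$ while $8K \leq (16/c)\log n < R$. This gives $|X'_K| \leq |\cT'_K|$, so it suffices to show $\E|\cT'_K|=o(1)$, working throughout on the high-probability event $\{|\cC|\leq n^{1/4}\}$ already established before the lemma statement.

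To bound $\E|\cT'_K|$ I would apply a first-moment enumeration over shape parameters: the leaf count $\ell\in(K,2^K]$, the total vertex count $t\leq 5(\ell-1)$, a labeled rooted tree structure on $t$ vertices (at most $t^{t-1}$ by Cayley's formula), and an embedding into $[n]$ consistent with the requirement that the $\ell$ leaves lie in $Z'=Z\cup\cC$. Each of the $t-1$ tree edges appears in $G$ independently with probability $p(2-p)\leq 2p$, and for any fixed vertex $v$ we have $\P(v\in Z)\leq O(c^3 e^{-c})=:q$ since $d^+_D(v),d^-_D(v)\sim\bin(n-1,c/n)$. Leaves in $\cC\setminus Z$ are handled separately: I would sum over the subset of leaves mapped into $\cC$, at a cost of $2^\ell$, with each such leaf contributing at most $|\cC|\leq n^{1/4}$ vertex choices rather than the $n$ available to a free leaf.

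Combining these ingredients one arrives at a bound of the schematic form $\E|\cT'_K| \leq n\sum_{\ell,t}(O(c))^{t-1}(O(c^3 e^{-c}))^\ell$, which one finishes via two observations: first, $c\ell \geq cK \geq 2\log n - O(1)$ yields $q^\ell \leq n^{-2+o(1)}$; second, $t \leq 5\cdot 2^K = n^{O(1/c)}$ keeps the remaining $(O(c))^{t-1}$ and $2^\ell$ factors at $n^{o(1)}$ for $c$ large, producing a bound of $n^{-1+o(1)} = o(1)$. The main obstacle I anticipate is controlling the coupling between the event that the tree sits in $G'$ and the event that its leaves lie in $Z$: here I would use the fact that conditioning on the $O(1)$ tree edges at a leaf $v$ changes $\P(d_D(v)\leq 3)$ by only a constant factor, which can be absorbed into the constant in $q$, so that the tree-existence and leaf-degree probabilities factor cleanly in the first-moment computation.
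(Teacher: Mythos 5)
Your overall scaffolding (invoke Lemma~\ref{lem-D'} for $k=K$, reduce to $\E|\cT'_K|=o(1)$, enumerate labeled trees via Cayley's formula, use $\P(v\in Z)=O(c^3e^{-c})$ per $Z$-leaf, and note that $c\ell\geq cK\geq 2\log n - O(1)$ kills the leading $n$) matches the spirit of the paper's argument for the $Z$-leaf part. The place where your proposal runs into real trouble is the treatment of the leaves landing in $\cC$.

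You propose to handle a $\cC$-leaf by saying that, on the event $\{|\cC|\le n^{1/4}\}$, such a leaf has ``at most $n^{1/4}$ vertex choices instead of $n$.'' That statement conflates a deterministic count conditional on $\cC$ with a first-moment probability bound. In $\E[|\cT'_K|\cdot\one_{\{|\cC|\le n^{1/4}\}}]$ the set $\cC$ is a function of the very same random graph $G$ that determines the events ``$T\subset G'$'' and ``$v\in Z$'', so once you fix (or bound) $\cC$ the remaining edge indicators are no longer independent and the per-edge factor $(2p)^{t-1}$ and per-leaf factor $q$ no longer decouple for free. You flag the coupling between $T\subset G'$ and $Z$-membership (which is indeed fine and handled as you suggest), but you do not address the coupling through $\cC$, and this is the part that is genuinely delicate: a leaf being in $\cC$ is a non-local event about short cycles in $G$. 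Note also that simply absorbing a $\cC$-leaf into the factor $(O(c^3e^{-c}))^{\ell}$, as your schematic bound implicitly does, is not justified by the argument you give.

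The paper routes around this entirely with a two-case decomposition. It first bounds $\cT^*_K$, the trees with at least $\ell-1$ leaves in $Z$: the one possibly-exceptional leaf is simply left unconstrained at a cost of a factor $\ell$, so $\cC$ never enters the probability computation, and the independent $Z$-leaf bound does all the work. Trees in $\cT'_K\setminus\cT^*_K$ have at least two leaves in $\cC$; there the union of the tree in $G'$ with two short cycles through those two leaves yields a subgraph on $m\le (60/c)\log n$ vertices with at least $m+1$ edges, and a clean unconditional first moment (equation~\eqref{eq-m-excess}) shows such a subgraph is w.h.p.\ absent. The factor-$\ell$ device for a single $\cC$-leaf is not cosmetic: with exactly one $\cC$-leaf the tree-plus-cycle union is only unicyclic ($m$ vertices, $m$ edges), whose expected count is $\Theta((e^2c)^m)$ and not small, so that case cannot be killed by a sparse-subgraph argument and must be absorbed as the paper does. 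You should either adopt this decomposition, or supply a careful argument (e.g., via an exploration that reveals $\cC$ first and controls the conditional edge law) that your $n^{1/4}$-per-$\cC$-leaf counting is legitimate.
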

\begin{proof}
In what follows let $L(T)$ denote the set of leaves of a tree $T$ and recall that if $T \in \cT'_k$ then $L(T) \subset Z' = \cC \cup Z$ by definition.

Let $\cT^*_k$ be the set of all trees in $\cT'_k$ where at least $\ell-1$ of the leaves belong to $Z$. We have $\binom{n}t$ choices for the vertices of $T \in \cT^*_k$ on $t$ vertices, and the well-known Cayley formula asserts that the number of labeled rooted trees on $t$ vertices is $t^{t-1}$. The probability that a given labeled tree on $t$ vertices is in $G$ (an upper bound on the probability it belongs to $G' \subset G$) is exactly $(2p)^{t-1}$. Finally, if $u\in L(T) \cap Z$ then by definition $d_G(u) \leq 3$ and in particular $d_{G\setminus T}(u) \leq 3$. Crucially, the events $\{ d_{G\setminus T}(u) \leq 3\}$ for $u \in L(T)$ are mutually independent as well as independent of all the interior edges of $T$ (accounted for in the probability that $T \subset G$). Altogether, for all $k \leq K$,
\begin{align}
\E |\cT^*_k| &\leq \sum_{\ell = k+1}^{2^k} \sum_{t \leq 5(\ell-1)}
\binom{n}t t^{t-1} (2p)^{t-1} \ell \left(2\P\big(\bin(n-t,p)\leq 3\big)\right)^{\ell-1} \,.\nonumber\\
&\leq \sum_{\ell = k+1}^{2^k} \sum_{t \leq 5(\ell-1)}
e\left(2ec\right)^{t-1} \left(2\P\big(\bin(n-t,p)\leq 3\big)\right)^{\ell-1} n\,,
  \label{eq-T*-exp-prel}
\end{align}
where in the last inequality we used the facts that $\binom{n}t \leq (en/t)^t$ and $t > \ell$.
If $c$ is sufficiently large then $n-t=(1-o(1))n$ as $t < 5 \cdot 2^K =o(n)$ and in particular
$\P\big(\bin(n-t,p)\leq 3\big) \leq \frac{1}{2} c^3 e^{-c}$.
Plugging this in \eqref{eq-T*-exp-prel} gives that for sufficiently large $n$,
\begin{align}
\E |\cT^*_k| &\leq \sum_{\ell = k+1}^{2^k} \sum_{t \leq 5(\ell-1)}
 (2ec)^{t} \left(c^3 e^{-c}\right)^{\ell-1}n \nonumber\\
  &\leq
 \sum_{\ell = k+1}^{2^k}
 \left((2e)^5 c^8 e^{-c}\right)^{\ell-1}n
\leq n e^{-\frac34 c k}
  \,,\label{eq-T*-exp}
\end{align}
where the last inequality holds for large enough $c$ and $n$. Substituting $k=K=\left\lfloor (2/c) \log n\right\rfloor$ now gives
\begin{align*}
\E |\cT^*_K| &\leq n e^{-\frac34 c K} = O(n^{-1/2}) = o(1)\,,
\end{align*}
hence w.h.p.\ $\cT^*_K = \emptyset$.

Now consider $T \in \cT'_K \setminus \cT^*_K$. Here there exist distinct $u_i,u_j \in L(T) \cap \cC$. As $T$ has at most $4K$ levels and connects $u_i,u_j$ in $G'$ (where the inner edges between the vertices of $\cC$ are absent) this implies the existence of a subgraph $F \subset G$ with
$m$ vertices and at least $m+1$ edges such that
\[ m\leq 2R + 8K + 2 \leq (60/c)\log n\]
(accounting for $u_i$ and $u_j$, a path of length at most $8K$ between them and up to 2 cycles in $\cC$, with the last inequality holding for large enough $c$). When $c$ is sufficiently large, the probability that such a graph $F$ belongs to $G$ is at most
\begin{align}
   \label{eq-m-excess}
   \binom{n}m \binom{\binom{m}2}{m+1}(2p)^{m+1} \leq
\left(\frac{en}{m}\right)^m \left(\frac{em}{2}\right)^{m+1}(2c/n)^{m+1} \leq \frac{m}{n} \big(e^2c\big)^{m+1}
=  O(1/\sqrt{n}) = o(1)
 \end{align}
implying that $\cT'_K \setminus \cT^*_K$, and hence also $\cT'_K$, is w.h.p.\ empty. By~\eqref{eq-Tk'-def} and the remark following that definition it now follows that $X_K' = \emptyset$ w.h.p., as required.
\end{proof}
It remains to estimate $|\cup_{k<K} X_k'|$. To this end, let $B(\cC,R/2)$ be the set of all vertices whose undirected distance from $\cC$ in $G$ is less than $R/2 = (10/c)\log n$. Consider some $v\in X_k'$
for some $k \leq K$, let $T_v$ be the corresponding tree provided by Lemma~\ref{lem-D'} and suppose first that some leaf $u$ in $T_v$ belongs to $\cC$ (recall that every leaf of $T_v$ is in $\cC \cup Z$
by~\eqref{eq-Tk'-def}). Since by definition $T_v$ has at most $4k \leq 4K \leq (8/c)\log n$ levels it follows that $T_v \subset B(\cC,R/2)$ and in particular $v \in B(\cC,R/2)$.
Due to this argument, if we let $\cT_k$ denote the set of rooted trees in $\cT'_k$ where \emph{all} leaves belong to $Z$ and let $Y_k$ denote the number of vertices serving as roots of such trees, i.e.
\begin{align*}
  \cT_k &= \left\{ T \subset G' \,:\,
\begin{array}{l}
\mbox{labeled rooted tree with $\ell$ leaves, $k<\ell\leq 2^k$, all belonging to $Z$,}\\
\mbox{a total of $t$ vertices for $t \leq 5(\ell-1)$ and at most $4k$ levels.}
\end{array} \right\}\\
Y_k &= \#\big\{ v \in V(G) \,:\,\mbox{$v$ is the root of $T$ for some $T \in \cT_k$}\big\}
\end{align*}
(notice that clearly $Y_k \leq |\cT_k|$ for any $k$), then
\[ |\cup_{k<K} X'_k| \leq |B(\cC,R/2)| + \sum_{k<K} Y_k \,.\]
To estimate the size of $B(\cC,R/2)$ observe that each vertex $v$ in this set corresponds to a graph on $m < 3R/2$ vertices and at least $m$ edges. We can therefore repeat the calculation in~\eqref{eq-m-excess} to get that
\[ \E |B(\cC,R/2)| \leq  \sum_{m<3R/2} \binom{n}m \binom{\binom{m}2}{m}(2p)^{m}
\leq \sum_{m<3R/2} \big(e^2c\big)^m< \sqrt{n} \,,\]
where the last inequality is valid for large $c$. In particular, $|B(\cC,R/2)|<n^{3/4}$ w.h.p.\ and it remains to estimate $\sum_{k<K} Y_k$.

Consider $|\cT_1|$, counting rooted labeled trees in $G$ with $2$ leaves (i.e.\ paths with a distinguished vertex) and at most $5$ vertices and where both leaves are in $Z$. Conditioned on the existence of a given labeled path $P$ in $G$, the probability that its endpoints are in $Z$ is less than the probability that each endpoint has an at most 3 in-neighbors or at most 3 out-neighbors in $D\setminus P$. Altogether,
\begin{align*}
\E Y_1 &\leq \E |\cT_1| \leq \sum_{2 \leq t \leq 5} t n^{t} (2p)^{t-1} \left(2\P\big(\bin(n-t,p)\leq 3\big)\right)^2 \\
&\leq 4 \cdot 5 (2c)^{4} \left(2\P\big(\bin(n-5,p)\leq 3\big)\right)^2  n
\leq 20 (2c)^4 \left(c^3 e^{-c}\right)^2 n
< 600\, c^{10} e^{-2c} n\,,
\end{align*}
where the last inequality holds for sufficiently large $n$.

Next examine $|\cT_k|$ for $2 \leq k < K$, which counts trees with at most $4k$ levels, $\ell \in\{k+1,\ldots,2^k\}$ leaves and a total of $t\leq 5(\ell-1)$ vertices, where all leaves are in $Z$. The calculation in~\eqref{eq-T*-exp-prel},\eqref{eq-T*-exp}, with the single change that now all leaves (rather than $\ell-1$) belong to $Z$, yields
\begin{equation}
  \label{eq-E-Yk}
  \E Y_k \leq  \E |\cT_k| \leq \sum_{\ell=k+1}^{2^k}\left((2e)^5 c^8 e^{-c}\right)^{\ell} n \leq e^{-\frac34 c (k+1)} n\,,
\end{equation}
and combining the above inequalities we deduce that for large enough $c$ and $n$ we have
\begin{equation}
  \label{eq-sum-k<K-exp}
  \sum_{k<K} \E Y_k \leq 1000\, c^{10} e^{-2c} n\,.
\end{equation}
To assess the deviation of the $Y_k$'s from their mean, set $K_0 = \lfloor \log \log n \rfloor $ and observe that
\[\sum_{K_0 \leq k < K} \E Y_k \leq 2 e^{-\frac34 c K_0}n <  n/\log^2 n\,,\]
with the last inequality easily holding for $c$ large. Applying Markov's inequality we deduce that
\begin{equation}
   \label{eq-sum-Yk-k>K0}
   \P\bigg(\sum_{K_0 \leq k < K} Y_k \geq n/\log n\bigg) \leq  1/\log n = o(1)\,.
 \end{equation}
It remains to estimate the $Y_k$'s for $k < K_0$. To this end, define $Y'_k$ to be the number of roots of trees $T \in \cT_k$ such that every vertex in $T$ has degree less than $\log^2 n$ in $G$:
\[ Y'_k = \#\big\{ v \in V(G) \,:\,\mbox{$v$ is the root of $T$ for some $T \in \cT_k$ and $d(u)<\log^2 n$ for all $u \in T$}\big\}\,.\]
Recall that the underlying graph $G$ is obtained from $D$ by erasing its edge directions. Therefore, $G$ itself is a random undirected graph $\cG(n,p')$ with edge probability $p'=1-(1-p)^2=(1+o(1))2p$. As we will formally state later, $G \sim \cG(n,p')$ has maximum degree less than $\log^2 n$ except with extremely low (super-polynomial) probability, and so $Y_k = Y'_k$ w.h.p. We will show that $Y'_k$ is concentrated about its mean and then use it to derive concentration for $Y_k$.

Let $(M_t)$ be the edge-exposure Doob's martingale for $D$; that is, let $e_1,\ldots,e_{\binom{n}2}$ be an arbitrary ordering of the edges of the complete graph on $n$ vertices and set $M_t = \E\left[ Y_k \mid \cF_t\right]$ where
$\cF_t$ is the $\sigma$-algebra corresponding to revealing the indicators $\{ \one_{\{e_i \in E(D)\}} : i \leq t\}$.
We are interested in bounds on the increments of the martingale $(M_t)$ in $L^\infty$ and $L^2$.

Consider the effect of modifying one of the indicators $\one_{\{e \in E\}}$; clearly this can create or destroy a tree $T \in \cT_k$ only if that tree includes an endpoint of $e$ as one of its vertices. Since $Y'_k$ counts roots of such trees where every vertex has degree less than $\log^2 n$ and by the definition of $\cT_k$ each such tree has at most $4k$ levels (including the root), it follows that modifying $e$ can alter $Y'_k$ by at most $(\log^2 n)^{4k}$. In other words, $Y'_k$ is $B$-Lipschitz as a function of the edges of $D$, where
\[ B = (\log^2 n)^{4k} \leq (\log^2 n)^{4K_0} \leq \exp\left(8 (\log\log n)^2 \right) = n^{o(1)}\,.\]
It is a well-known (and easy to show) corollary that in this case $|M_{t+1}-M_t|\leq B$ for all $t$ (see, e.g.~\cite{AS} for the standard coupling argument deriving this for Doob's martingale of Lipschitz functions).

Now assume that we have exposed $\one_{\{e_1\in E\}},\ldots,\one_{\{e_t\in E\}}$ and are about to reveal whether or not $e_{t+1} \in E$. We wish to bound $\var(M_{t+1} \mid \cF_t)$. If we let $\theta = \E[ Y'_k \mid \cF_t,\, e_{t+1} \notin E]$ then the shifted variable $Q = M_{t+1} - \theta$ satisfies $\P(Q \neq 0\mid \cF_t) = \P(e_{t+1} \in E \mid \cF_t) =  p'\le 2p$ whereas $|Q| \leq B$ by the assumption that $\P(|M_{t+1}-M_t|\leq B)=1$ (in fact, even more precisely, one has $|Q| \leq B$ due to the $B$-Lipschitz property of $Y'_k$). Thus,
\[ \var\left( M_{t+1} \mid \cF_t\right) = \var(Q\mid \cF_t) \leq 2p(B)^2 = n^{-1+o(1)}\,.\]
and we conclude that for some $L = n^{1+o(1)}$ we have $\sum_{t} \var\left(M_{t} \mid \cF_{t-1}\right) \leq L$ with probability 1.

We are now in a position to apply the following large-deviation inequality which is a special case of a result of Freedman~\cite{Freedman}*{Theorem 1.6} (see also \cite{McDiarmid}*{Theorem 3.15}):
\begin{theorem}
Let $(S_0,S_1,\ldots,S_N)$ be a martingale with respect to the filter $(\cF_i)$.
Assume that $S_{i+1}-S_i \leq B$ for all $i$ and that $\sum_{i=1}^N \var(S_i\mid\cF_{i-1}) \leq L$ with probability 1 for some $L>0$.
Then for any $s>0$ we have $\P\left(\bigcup_{i=1}^N\{S_i \geq S_0 + s\}\right) \leq \exp\left[- \tfrac12 s^2 /(L + B s )\right]$.
\end{theorem}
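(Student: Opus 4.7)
I would use the standard exponential--supermartingale (Bernstein/Freedman) method. Set $X_i=S_i-S_{i-1}$, so $\E[X_i\mid\cF_{i-1}]=0$ and $X_i\le B$ a.s. Writing $\phi(u)=(e^u-1-u)/u^{2}$ (with $\phi(0)=1/2$), the identity $e^{\lambda x}=1+\lambda x+(\lambda x)^{2}\phi(\lambda x)$ together with the fact that $\phi$ is nondecreasing on $\mathbb{R}$ yields, for every $\lambda\ge 0$ and every $x\le B$, the pointwise bound $e^{\lambda x}\le 1+\lambda x+\lambda^{2}x^{2}\phi(\lambda B)$. Conditioning on $\cF_{i-1}$ annihilates the linear term and gives
\[
  \E\!\bigl[e^{\lambda X_i}\mid\cF_{i-1}\bigr]\le 1+\lambda^{2}\phi(\lambda B)\,\var(S_i\mid\cF_{i-1})\le\exp\!\bigl(\lambda^{2}\phi(\lambda B)\,\var(S_i\mid\cF_{i-1})\bigr).
\]

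Writing $V_i=\sum_{j\le i}\var(S_j\mid\cF_{j-1})$, the inequality above is exactly what is needed for
\[
  M_i=\exp\!\bigl(\lambda(S_i-S_0)-\lambda^{2}\phi(\lambda B)\,V_i\bigr)
\]
to be a nonnegative supermartingale with $M_0=1$. Doob's maximal inequality then yields $\P(\sup_{i\le N}M_i\ge a)\le 1/a$, and on the event $\bigcup_{i\le N}\{S_i\ge S_0+s\}$ the constraint $V_i\le V_N\le L$ forces $M_i\ge\exp(\lambda s-\lambda^{2}\phi(\lambda B)L)$ at the first index where $S_i-S_0$ crosses $s$. Combining the two bounds produces the intermediate estimate
\[
  \P\!\Bigl(\bigcup_{i=1}^{N}\{S_i\ge S_0+s\}\Bigr)\le \exp\!\bigl(-\lambda s+\lambda^{2}\phi(\lambda B)\,L\bigr).
\]

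The remaining task -- and the main, though purely computational, obstacle -- is to optimise $\lambda$ so that the right--hand side collapses to $\exp(-\tfrac12 s^{2}/(L+Bs))$. I would take $\lambda=s/(L+Bs)$, so that $u:=\lambda B=Bs/(L+Bs)\in[0,1)$ and $1-u=L/(L+Bs)$. A direct substitution reduces everything to the elementary one--variable inequality
\[
  \phi(u)\le\frac{1}{2(1-u)}\qquad\text{for } u\in[0,1),
\]
equivalently $2(1-u)(e^{u}-1-u)\le u^{2}$. Letting $f$ denote the difference of the two sides, one computes $f'(u)=2u(1-e^{u})\le 0$ on $[0,\infty)$ while $f(0)=0$, so $f\le 0$ on $[0,1)$, completing the proof. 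No further probabilistic input is needed beyond the supermartingale argument carried out above.
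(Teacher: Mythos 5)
Your proof is correct. Note, though, that the paper does not prove this theorem at all: it is quoted as a special case of Freedman's inequality (\cite{Freedman}, Theorem~1.6; see also \cite{McDiarmid}, Theorem~3.15), with the reader referred to those sources. Your argument is the standard exponential-supermartingale derivation found there: the one-sided increment bound and the monotonicity of $\phi(u)=(e^u-1-u)/u^2$ give $\E[e^{\lambda X_i}\mid\cF_{i-1}]\le\exp(\lambda^2\phi(\lambda B)\var(S_i\mid\cF_{i-1}))$, so $M_i=\exp(\lambda(S_i-S_0)-\lambda^2\phi(\lambda B)V_i)$ is a nonnegative supermartingale; Doob's maximal inequality together with $V_i\le L$ a.s.\ yields $\P(\cup_i\{S_i\ge S_0+s\})\le\exp(-\lambda s+\lambda^2\phi(\lambda B)L)$; and the choice $\lambda=s/(L+Bs)$ reduces the claim to $\phi(u)\le\frac{1}{2(1-u)}$ on $[0,1)$, which you verify correctly via $f(u)=2(1-u)(e^u-1-u)-u^2$, $f(0)=0$, $f'(u)=2u(1-e^u)\le 0$. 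All steps, including the monotonicity of $\phi$ (which you assert but is indeed a standard fact), are sound.
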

Plugging in our estimate for $|M_{t+1}-M_t|$ and $\sum_t \var(M_t \mid \cF_{t-1})$ while recalling that by definition of the Doob martingale $M_0 = \E Y'_k$ while $M_{\binom{n}2} = Y'_k$ it now follows that
\[ \P(|Y'_k - \E Y'_k | > s) \leq 2 \exp\left[ -\tfrac12 s^2 /\big(n^{1+o(1)}+n^{o(1)}s\big)\right]\,, \]
and in particular
\begin{align}\label{eq-Y'k-concen}
\P(|Y'_k - \E Y'_k | > n^{3/4}) \leq \exp\big(n^{-1/2+o(1)}\big)\,.
\end{align}
To complete the proof, recall that the probability that any vertex in $G \sim \cG(n,p')$ would have degree at least $\log^2 n$ is at most
\[  n \P\left(\bin(n,2c/n)\geq \log^2 n\right) \leq n \exp\left(-c' \log^2 n\right) < n^{-10}\,,\]
where the last inequality holds for large enough $n$. In particular, $Y'_k = Y_k$ except with probability $n^{-10}$ and since by definition $0 \leq Y'_k \leq Y_k \leq n$ we further have $\E[Y_k] = \E[Y'_k] + O(n^{-9})$. Combining these inequalities with~\eqref{eq-Y'k-concen} now gives
\[ \P(|Y_k - \E Y_k | > 2n^{3/4}) \leq 2n^{-10}\,,\]
where the extra factors of 2 absorbed the $O(n^{-9})$ and $\exp(n^{-1/2+o(1)})$ error terms. In particular, taking a union bound over the $K_0 \leq \log\log n$ values of $k$ we deduce that w.h.p.
\[ \sum_{k < K_0} Y_k - \sum_{k<K_0} \E Y_k \leq 2n^{3/4}\log\log n\,.\]
Finally, combining this inequality with~\eqref{eq-sum-k<K-exp} and~\eqref{eq-sum-Yk-k>K0} we conclude that w.h.p.
\[ \sum_{k < K} Y_k \leq 1000 c^{10} e^{-2c} n + 2n^{3/4}\log\log n + n/\log n = (1000+o(1))c^{10} e^{-2c} n\,,\]
where the last inequality holds for large enough $n$. Together with the aforementioned bounds on $X$ in terms of $X'$ and in turn of $X'$ in terms of $\sum Y_k$ we conclude that w.h.p.
\[ |X| \leq |\cC| + |B(\cC,R/2)| + \sum_{k<K} Y_k < n^{1/4} + n^{3/4} + (1000+o(1))c^{10} e^{-2c} n \leq (2c)^{10} e^{-2c} n\,,
\]
where the last inequality is valid for any sufficiently large $n$, as required.
\end{proof}

\section{Concluding remarks}\label{sec:concl}
We have proved that a random directed graph $\cD(n,c/n)$ contains
with high probability a directed cycle including all but at most
$(2+\epsilon)e^{-c}n$ vertices, where $\epsilon=\epsilon(c)\to 0$ as
$c\to\infty$. In fact, our proof shows that the relative error term
$\epsilon(c)$ is exponentially small in $c$, namely $\epsilon(c)\le
\operatorname{poly}(c)e^{-c}$.
The main term in the result is asymptotically optimal as such a
random digraph typically contains  $(2e^{-c}-o(1))n$ vertices with
zero in-degree or out-degree.

It would be very interesting to derive accurate estimates for the
length of a longest cycle in $\cD(n,c/n)$ for small(er) values of
the constant $c$, starting perhaps as low as the threshold for the
appearance of a linear length cycle in such a random digraph. See
the related work~\cite{Luczak91} 
where {\L}uczak studied the length of 
the longest cycle in the undirected random graph near its critical window, 
showing lower and upper bounds that are tight up to a factor of $1+\log(3/2)\approx 1.41$.

Compared to the situation in undirected graphs, the toolkit
available for the case of directed graphs is rather poor at present,
thus making the progress in a variety of questions about directed
random and pseudo-random graphs much harder to achieve. In
particular, the absence of any form of a direct analogue of the
famed P\'osa's rotation-extension technique, widely applied for
undirected graphs, is felt throughout. It would be very useful to
derive some directed version of it.

In general, the field of random and pseudo-random directed graphs is
largely an uncharted territory, compared to the situation for the
undirected case. Although this is certainly partly due to its
relative difficulty, we believe enough knowledge and technology have
been accumulated now to start exploring it in a systematic way.
One recent such result is the paper~\cite{BKS2}, where global resilience type results with
respect to long cycles have been derived for sparse random and
pseudo-random directed graphs. It would be interesting to explore
further resilience type questions in directed graphs.

\vspace{0.2cm}
\noindent
{\bf Acknowledgment.}\,
A major part of this work was carried out when the
first and the third authors were visiting Microsoft Research at Redmond, WA. They
would like to thank the Theory Group at Microsoft Research
for hospitality and for creating a stimulating research environment.

\begin{bibdiv}
\begin{biblist}[\normalsize]

\bib{AKS}{article}{
   author={Ajtai, Mikl{\'o}s},
   author={Koml{\'o}s, J{\'a}nos},
   author={Szemer{\'e}di, Endre},
   title={The longest path in a random graph},
   journal={Combinatorica},
   volume={1},
   date={1981},
   number={1},
   pages={1--12},
}

\bib{AS}{book}{
  author={Alon, Noga},
  author={Spencer, Joel H.},
  title={The probabilistic method},
  edition={3},
  publisher={John Wiley \& Sons Inc.},
  date={2008},
  pages={xviii+352},
}

\bib{BKS}{article}{
   author={Ben-Eliezer, I.},
   author={Krivelevich, M.},
   author={Sudakov, B.},
   title={The size Ramsey number of a directed path},
   status = {preprint},
   note={Available at \texttt{arXiv:1005.5171} (2010)},
}

\bib{BKS2}{article}{
   author={Ben-Eliezer, I.},
   author={Krivelevich, M.},
   author={Sudakov, B.},
   title={Long cycles in subgraphs of (pseudo)random directed graphs},
   status = {preprint},
   note={Available at \texttt{arXiv:1009.3721} (2010)},
}

\bib{Bollobas}{article}{
   author={Bollob{\'a}s, B{\'e}la},
   title={Long paths in sparse random graphs},
   journal={Combinatorica},
   volume={2},
   date={1982},
   number={3},
   pages={223--228},
}

\bib{BFF}{article}{
   author={Bollob{\'a}s, B.},
   author={Fenner, T. I.},
   author={Frieze, A. M.},
   title={An algorithm for finding Hamilton paths and cycles in random
   graphs},
   journal={Combinatorica},
   volume={7},
   date={1987},
   number={4},
   pages={327--341},
   issn={0209-9683},
}


\bib{Fd}{article}{
 author={Fernandez de la Vega, W.},
 title={Long paths in random graphs},
 journal={Studia Sci. Math. Hungar.},
 volume={14},
 date={1979},
 pages={335--340},
}

\bib{Freedman}{article}{
   author={Freedman, David A.},
   title={On tail probabilities for martingales},
   journal={Ann. Probability},
   volume={3},
   date={1975},
   pages={100--118},
}

\bib{Frieze}{article}{
   author={Frieze, A. M.},
   title={On large matchings and cycles in sparse random graphs},
   journal={Discrete Math.},
   volume={59},
   date={1986},
   number={3},
   pages={243--256},
}

\bib{Luczak91}{article}{
   author={{\L}uczak, Tomasz},
   title={Cycles in a random graph near the critical point},
   journal={Random Structures Algorithms},
   volume={2},
   date={1991},
   number={4},
   pages={421--439},
}

\bib{McDiarmid79}{article}{
author={McDiarmid, Colin},
   title={Clutter percolation and random graphs},
   journal={Math. Programming Study},
   volume={3},
   date={1980},
   pages={17--25}
}

\bib{McDiarmid}{article}{
   author={McDiarmid, Colin},
   title={Concentration},
   conference={
      title={Probabilistic methods for algorithmic discrete mathematics},
   },
   book={
      series={Algorithms Combin.},
      volume={16},
      publisher={Springer},
      place={Berlin},
   },
   date={1998},
   pages={195--248},
}

\end{biblist}
\end{bibdiv}

\end{document}